\DeclareMathOperator*{\argmin}{argmin}
\DeclareMathOperator*{\sgn}{sgn}
\newcommand{\thetab}{\overline{\theta}}
\newcommand{\ds}{\displaystyle}
\newcommand{\nexto}{\kern -0.54em}
\newcommand{\dR}{{\rm {I\ \nexto R}}}
\newcommand{\dZ}{{\cal Z \kern -0.7em Z}}
\newcommand{\dC}{{\rm\hbox{C \kern-0.8em\raise0.2ex\hbox{\vrule
height5.4pt width0.7pt}}}}
\newcommand{\dQ}{{\rm\hbox{Q \kern-0.85em\raise0.25ex\hbox{\vrule
height5.4pt width0.7pt}}}}
\newcommand{\proofbox}{\hspace{\fill}{$\Box$}}
\newtheorem{lemma}{Lemma}
\newtheorem{theorem}{Theorem}
\newtheorem{proposition}{Proposition}
\newtheorem{remark}{Remark}
\newenvironment{proof}{Proof.}{\proofbox}
\begin{document}

\author{Authors}

\author{
C. Yal{\c c}{\i}n Kaya\footnote{School of Information Technology and Mathematical 
Sciences, University of South Australia, Mawson Lakes, S.A. 5095, Australia. E-mail: yalcin.kaya@unisa.edu.au\,.\ \ ORCID: https://orcid.org/0000-0001-7962-7153}
}

\title{\vspace{0mm}\bf Markov--Dubins Interpolating Curves}

\maketitle

\begin{abstract} 
{\noindent\sf  A realistic generalization of the Markov--Dubins problem, which is concerned with finding the shortest planar curve of constrained curvature joining two points with prescribed tangents, is the requirement that the curve passes through a number of prescribed intermediate points/nodes.  We refer to this generalization as the Markov--Dubins interpolation problem.  We formulate this interpolation problem as an optimal control problem and obtain results about the structure of its solution using optimal control theory.  The Markov--Dubins interpolants consist of a concatenation of circular ($C$) and straight-line ($S$) segments.  Abnormal interpolating curves are shown to exist and characterized; however, if the interpolating curve contains a straight-line segment then it cannot be abnormal.  We derive results about the stationarity, or criticality, of the feasible solutions of certain structure.  In particular, any feasible interpolant with arc types of $CSC$ in each stage is proved to be stationary, i.e., critical.   We propose a numerical method for computing Markov--Dubins interpolating paths.  We illustrate the theory and the numerical approach by four qualitatively different examples.}
\end{abstract}

\begin{verse} {\em Key words}\/: {\sf Markov--Dubins path, Interpolation, Constrained curvature, Optimal control, Singular control, Bang--bang control, Abnormal optimal control problem.}
\end{verse}

\begin{verse} 
{\bf AMS subject classifications.} {\sf Primary 49J15, 49K15\ \ Secondary 65K10, 90C30}
\end{verse}

\pagestyle{myheadings}
\thispagestyle{plain}
\markboth{\sf\scriptsize C. Y. Kaya}{\sf\scriptsize Markov--Dubins Interpolating Curves\ \ by\ C. Y. Kaya}

\section{Introduction}

We define a {\em Markov--Dubins interpolating curve} as the shortest ${\cal C}^1$ and piecewise-${\cal C}^2$ planar curve\linebreak $z:[0,t_N]\longrightarrow\dR^2$ that passes through $(N+1)$ points, $p_0,p_1,\ldots,p_{N-1},p_N$, $N\ge 1$, prescribed at $0$ and at the {\em free} parameter values $0<t_1<\ldots<t_{N-1}<t_N$, where the slopes, i.e., the velocities, at $p_0$ and $p_N$ are also prescribed, such that the curvature of the path $z(t)$ at almost every point is not greater than $a>0$.  Note that the parameters $t_1,t_2,\ldots,t_N$ are unknown; so, they also are to be determined.  The problem of finding a Markov--Dubins interpolating curve can then be posed as follows.
\[
\mbox{(P)}\left\{\begin{array}{rl}
\min &\ t_N \\[2mm]
\mbox{s.t.} &\ z(t_0) = p_0\,,\ z(t_1) = p_1\,,\ldots,\ z(t_N) = p_N\,,\\[2mm]
  &\ \dot{z}(t_0) = v_0,\ \dot{z}(t_N) = v_N\,,
                   \\[2mm]
  &\ \|\ddot{z}(t)\|\le a\,,\ \|\dot{z}(t)\| = 1\,,\mbox{ for a.e. }  t\in[0,t_N]\,,
\end{array}\right.
\]
where $\dot{z} = dz/dt$, $\ddot{z} = d^2z/dt^2$, and $\|\cdot\|$ is the Euclidean norm.  By continuity, we clearly have $\|v_0\| = \|v_N\| = 1$.  We further make the obvious assumption that $p_{i-1}\neq p_i$, $i=1,\ldots,N$.

\subsection{Existing results on the Markov--Dubins problem}
The special case of Problem~(P) with $N=1$ is the celebrated {\em Markov--Dubins problem}, a solution curve of which is referred to as {\em Markov--Dubins path}, which turns out to be a concatenation of circular subarcs and a straight line, as proved by Lester Eli Dubins~\cite{Dubins1957} in 1957, although the problem was first posed and some instances studied by Andrey Andreyevich Markov~\cite{Markov1889, KreNud1977} in 1889---which explains the term ``Markov--Dubins path."  Suppose that a circular arc is represented by $C$ and a straight line segment by $S$.  Dubins' elegant result asserts that the sequence of concatenated arcs in such a shortest path can be of type $CSC$, $CCC$, or a subset thereof. 

We have recently studied an optimal control formulation of the Markov--Dubins problem and reproduced Dubins' result using optimal control theory and perturbation techniques~\cite[Theorem~1]{Kaya2017}, as was also done with slightly different approaches, in~\cite{BoiCerLeb1991, SusTan1991}.  The study in \cite{Kaya2017} has presented the following additional contributions.
\begin{itemize}
\item Abnormal optimal control solutions (when the optimal multiplier of the objective functional is zero) do exist and are characterized as curves of either type $CC$ or type $C$~\cite[Lemmas~5 and 7 and Corollaries~1 and 2]{Kaya2017}.  
\item Any feasible path of the types listed in Dubins' 1957 result is a stationary solution, i.e., that these feasible paths satisfy the maximum principle (or the necessary conditions of optimality)~\cite[Theorem~2]{Kaya2017}.  
\item Exploiting the structure of the optimal solution and using arc parameterization techniques~\cite{KayNoa1996, KayNoa2003, MauBueKimKay2005}, a numerical method has been proposed and illustrated via examples~\cite[Section~5]{Kaya2017}, including the abnormal case. 
\end{itemize}
For a survey of the other studies related to the Markov--Dubins problem, see \cite{Kaya2017}.

\subsection{Interpolation and the Markov--Dubins problem}
Problem~(P) is a generalization of the Markov--Dubins problem in the sense that the curvature-constrained curve between two given oriented points is required to pass through a number of prescribed intermediate points. Many of the results obtained in \cite{Kaya2017} serve as building blocks for the results for the interpolation problem~(P), in the present paper.

Reformulation of interpolation problems as optimal control problems is not new.  For example, Kaya and Noakes study in \cite{KayNoa2013} interpolating curves in $\dR^n$, for any $n\ge1$, minimizing the $L^\infty$-norm of the acceleration vector, using optimal control theory.  Their reformulation gives rise to an optimal control problem with intermediate constraints or a multi-stage optimal control problem, which can be effectively treated by using the optimal control theory and implementation in \cite{AugMau2000, ClaVin1989, DmiKag2011}.  The interpolation problem in \cite{KayNoa2013} is markedly different from the one studied in the present paper, however the optimal control approach we will adopt is similar.

We now describe some earlier work also using optimal control in the study of interpolating curves:  The 1975 work of McClure~\cite{McClure1975} formulates an optimal control problem to answer the question of existence of a perfect spline.  Aronsson extends in~\cite{Aronsson1979} McClure's work to more general objective functionals; however, both of these early works are concerned with scalar interpolating functions, i.e., the space they work in is $\dR$, rather than $\dR^2$.  Fredenhagen, Oberle and Opfer \cite{FreObeOpf1999, OpfObe1988} study restricted as well as monotone cubic spline interpolants by treating the interpolation problem as an optimal control problem. Agwu and Martin's study in \cite{AgwMar1998} is along similar lines. Interpolating curves minimizing various other criteria (but not the criterion and the setting we consider in this paper) are studied by means of optimal control by Isaev in~\cite{Isaev2010}.

Although Markov--Dubins path has been extensively studied both theoretically and practically, finding as wide range of applications as the path planning of drones (or uninhabited aerial vehicles) and robots, and the tunnelling in underground mines, to the author's knowledge, its generalization to interpolation has not been studied in its entirety or true form yet. Brunnett, Kiefer and Wendt~\cite{BruKieWen1999} consider Problem~(P) (but not in the form we pose it here) with a bound on the average, rather than the pointwise, curvature, and propose an algorithm for finding what they call a ``Dubins spline" with the following ad hoc steps: (Step 1)~Estimate/guess the missing velocity directions at the interior nodes, (Step 2)~Find a Markov--Dubins path of type $CSC$ (but not of type $CCC$ or a subset thereof) between each two consecutive nodes and (Step 3)~Update the estimated missing velocity directions at the interior nodes using some nonlinear optimization procedure to minimize the overall length of the spline; [if some stopping criterion is not satisfied, then] go to Step 2.  In~\cite{BruKieWen1999}, not only the problem that is solved is different from Problem~(P), but also the theory and pertaining analysis are not adequately covered.  Similar ad hoc approaches ultimately leading to interpolants which are suboptimal solutions, of problems related to Problem~(P), can be found in the literature; see, for example, \cite{SavFraBul2008, IsaShi2015, Looker2011}.

Relatively more recently, Goaoc, Kim and Lazard \cite{GoaKimLaz2013} studied Problem~(P), by using the same assumption that was also used in \cite{BruKieWen1999}:  the optimal path between any two consecutive nodes will be of type $CSC$.  To guarantee this, every two consecutive nodes are assumed to be placed farther than $4/a$ units apart.  Via this assumption, they reduce Problem~(P) to a problem where one needs to find the $(N-1)$ missing velocity directions, or angles, at the interior nodes.  They define a function of these unknown angles and state that this function is locally strictly convex over a certain domain.  They present a result concluding that a solution of Problem~(P), under the stated assumptions, can be found by solving up to $2^{N-3}$ convex problems with up to $4(N-1)$ inequality constraints in each convex problem, presumably the exponential number reflecting the combinatorial nature of the problem.  The work in~\cite{GoaKimLaz2013} does not provide a mathematical description of the convex problems in standard form.  It does not provide an algorithm or numerical experiments, either.  So, one cannot implement the approach in \cite{GoaKimLaz2013} and make comparisons to test its efficacy.  We keep in mind that~\cite{GoaKimLaz2013} is concerned only with interpolants whose curve segments between two consecutive nodes are of type $CSC$.  We note again that in this particular approach the effort to find a solution grows exponentially with the number of nodes, as one would expect.

Given the above background concerning interpolation and the Markov--Dubins problem, there is an obvious need to perform a full analysis of Problem~(P) and develop new numerical methods based on this analysis to solve Problem~(P).  It is also of our concern that, in the existing papers, which present ad hoc numerical techniques, almost no numerical experiment can be reproduced, because of either a lack of complete description of an algorithm or the incompleteness of the data used to conduct the experiment.

Markov-Dubins interpolating curves would be applicable, for example, to land and marine surveillance, including military and civilian search-and-rescue operations, and agriculture, where it might be desirable to obtain images of a complicated terrain (land or sea bed) at a sequence of specified locations (nodes, in this case) by means of an aerial vehicle or a sea vessel which must follow the shortest route through these nodes.  The current paper is also driven by the need of reliable computational techniques to find optimal paths for such practical applications.

\subsection{Contributions of the current paper}
The approach in the present paper uses optimal control theory to derive the necessary conditions of optimality, after reformulating the interpolation problem~(P), first as a multi-point constrained optimal control problem~(Pc), and then as a multistage, or multiprocess, optimal control problem~(Pmc).  This approach is similar to that adopted in \cite{KayNoa2013}, but it is implemented here for a different problem.  

Since the optimality conditions furnish conditions for optimal curves between each two consecutive nodes, as well as some additional conditions at the nodes, Lemmas~1--7 in this paper turn out to be companions of those obtained for the single-stage problem in~\cite{Kaya2017}.  The proofs of Lemmas~1--7 (except some parts as indicated) are obtained along similar lines to those in~\cite{Kaya2017}; so, the proofs of Lemmas~1--7 in this paper are given briefly, also indicating the similarities and dissimilarities to those in~\cite{Kaya2017}.  It should be noted that, in addition to optimal control theory, which alone is not enough, perturbation analysis is also utilized in Lemmas~\ref{abnormal_int} and \ref{CCCC_int}.

The first main result of the present paper is given in Theorem~\ref{Dubins_int}.  It extends Dubins' theorem from finding the shortest curvature-constrained curve between two oriented points to finding the shortest curvature-constrained interpolating curve passing through intermediate nodes between two oriented end points.  The structure of the solution interpolating curve in each stage, i.e., the piece between each two consecutive nodes, is again one of either type $CCC$ (bang--bang--bang) or $CSC$ (bang--singular--bang) or a subset thereof, with additional conditions imposed at the nodes.  As in~\cite{Kaya2017}, abnormal interpolating curves are shown to exist.  However, by Remark~\ref{rem:abnormal}, if there is even a single straight-line segment in the whole interpolating curve, then the solution cannot be abnormal.

Proposition~\ref{subarcs_int} states that under the reasonable assumption of continuous curvature at the (interior) nodes, the total number of subarcs in the Markov--Dubins interpolating curve is at most $(2N+1)$, which is considerably smaller than $3N$ for large $N$.

Theorems~\ref{stationarity_int} and \ref{stationarity_int_abnormal} provide conditions, under which, any feasible solution of Problem~(P) of the possible types listed in Theorem~\ref{Dubins_int} is stationary.  Theorem~\ref{stationary_CSC} states a stronger/particular result in that any feasible solution with arc types of $CSC$ in each stage is a stationary solution.  

Proposition~\ref{lengths_CSC}(b) states that, given two consecutive stages of type $CSC$, the node is placed at the ÒmidpointÓ of the common $C$-subarc.  This is a computationally useful feature, as it can be employed to facilitate/speed up convergence.  We note that a proof of this result is given in~\cite[Lemma~4.1]{GoaKimLaz2013}.  In the current paper, we provide a proof which is different from/alternative to, and arguably shorter than, that in~\cite{GoaKimLaz2013}.

Since the solution structure of Problem~(P) is shown to be a concatenation of bang and singular arcs, we parameterize the problem with respect to the subarcs for the whole interpolating curve that is sought after.  Hence, we transform Problem~(P) into the finite-dimensional optimization problem~(Ps) in terms of just the subarc lengths, and do it in a rather neat form.  The {arc parameterization technique} that is implemented here to obtain Problem~(Ps) comes from~\cite{KayNoa1994, KayNoa1996, KayLucSim2004, KayNoa2003,  MauBueKimKay2005}, another implementation of which can also be found \cite{KayMau2014}.


Four qualitatively different examples involving Markov--Dubins interpolating curves are studied.  The numerical details are given in sufficient detail so that they can be cross-checked/verified, bearing also in mind that they may constitute test-bed examples in related future studies.  It should be noted that a solution to Problem~(Ps) can be found with a high precision even in the cases when getting a solution to Problem~(Pc) is not possible via direct discretization.

The paper is organized as follows. In Section~\ref{optcont}, we transform Problem~(P) first into a time-optimal control problem and then a multi-stage optimal control problem.  We obtain the necessary conditions of optimality.  In Section~\ref{Markov-Dubins_int}, we provide the preliminary result to lay the ground and prove the first main result of the paper, Theorem~\ref{Dubins_int}.  Proposition~\ref{subarcs_int}, a result on the total number of subarcs is presented in Section~\ref{sec:stationarity}.  Section~\ref{sec:stationarity} also discusses stationarity of a class of feasible solutions stated in Theorems~\ref{stationarity_int}--\ref{stationary_CSC}.  Section~\ref{themethod} describes the numerical approach, while Section~\ref{experiments} presents the numerical experiments.  Finally, Section~\ref{conclusion} concludes the paper, with a discussion and a short list of open problems.

\newpage

\section{Optimal Control Formulation and a Maximum Principle}
\label{optcont}

Just like the Markov--Dubins problem studied in~\cite{Kaya2017},  Problem~(P) can be equivalently cast as an optimal control problem, albeit in a more general form, as follows.  Let $z(t) := (x(t), y(t))\in\dR^2$, with $\dot{x}(t) := \cos\theta(t)$ and $\dot{y}(t) := \sin\theta(t)$, where $\theta(t)$ is the angle the velocity vector $\dot{z}(t)$ of the curve $z(t)$ makes with the horizontal.  These definitions readily verify that $\|\dot{z}(t)\| = 1$.  One also has that $p_0 = (x_0,y_0)$ and $p_N = (x_f,y_f)$.  Moreover, $\|\ddot{z}\|^2 = \ddot{x}^2+\ddot{y}^2 = \dot{\theta}^2$.  Therefore, $|\dot{\theta}(t)|$ is nothing but the {\em curvature}.  The quantity $\dot{\theta}(t)$, on the other hand, which can be positive or negative, is referred to as the {\em signed curvature}.  Consider, figuratively, a vehicle travelling along a circular path.  If $\dot{\theta}(t) > 0$ then the vehicle travels in the counter-clockwise direction, i.e., it {\em turns left} (along an {\em $L$-subarc}), and if $\dot{\theta}(t) < 0$ then the vehicle travels in the clockwise direction, i.e., it {\em turns right} (along an {\em $R$-subarc}).

Let $u(t) := \dot\theta(t)$.  Suppose that the {\em angles of the slopes} (of directions) with the horizontal, at the points $p_0$ and $p_N$ are denoted by $\theta_0$ and $\theta_f$, respectively.  Problem~(P) can then be re-written as a {\em time-optimal} (or {\em minimum-time}) {\em control problem}, where $x$, $y$ and $\theta$ are the {\em state variables} and $u$ the {\em control variable}:

\[
\mbox{(Pc)}\left\{\begin{array}{rll}
\min &\ t_N & \\[2mm]
\mbox{s.t.} &\ \dot{x}(t) = \cos\theta(t)\,, & x(0) = x_0\,,\
         x(t_1) = x_1\,,\ldots,\ x(t_N) = x_f\,, \\[2mm] 
  &\ \dot{y}(t) = \sin\theta(t)\,, & y(0) = y_0\,,\ y(t_1) =
         y_1\,,\ldots,\ y(t_N) = y_f\,,\\[2mm] 
  &\ \dot{\theta}(t) = u(t)\,, & \theta(0) = \theta_0\,,\ \theta(t_N) =
    \theta_f\,,\\[3mm]
  & & |u(t)|\le a\,,\mbox{ for a.e. }  t\in[0,t_N]\,.
\end{array}\right.
\]
The marked difference between Problem~(Pc) and the standard optimal control problem representing the Markov--Dubins problem in~\cite{Kaya2017} (for $N=1$) is that in Problem~(Pc) the state variables $x(t)$ and $y(t)$ are specified, i.e., they are constrained to take certain values, at the intermediate unknown time points $t_1,t_2,\ldots,t_{N-1}$.  These constraints are often referred to in the optimal control literature as {\em interior point state constraints}.   Problem~(Pc) can be further transformed into a {\em multiprocess}, or {\em multistage}, {\em optimal control problem}, as in \cite{KayNoa2013}, where a different class of interpolating curves, namely, the class of interpolating curves minimizing their pointwise maximum acceleration, was studied. 

A maximum principle, i.e., necessary conditions of optimality, for multistage problems is provided by Clarke and Vinter in \cite{ClaVin1989} for rather general problems, including problems which are not differentiable, for which the transversality conditions are presented by means of generalized derivatives and normal cones. Augustin and Maurer \cite{AugMau2000} transform the multistage control problem for a special class of systems (including the class we have in this paper) into a single-stage one by means of a standard rescaling of the unknown stage time durations to unity (defined below). This allows the transversality conditions to be described more simply. Dmitruk and Kaganovich~\cite{DmiKag2011} study optimal control problems with intermediate state constraints.  We will make use of the references \cite{AugMau2000,ClaVin1989,DmiKag2011}, as well as \cite{KayNoa2013}, where in the latter reference a similar setting was employed for an entirely different interpolation problem, in writing the necessary conditions of optimality.

Define a new time variable $s$ in terms of $t$ to map the arc length $\tau_i$ of each {\em stage} $i$ to unity as follows.
\[
t = t_{i-1} + s\,\tau_i\,,\quad s\in[0,1]\,,\quad \tau_i := t_i - t_{i-1}\,,\quad i = 1,\ldots,N\,.
\]
With this definition, the time horizon of each stage $i$ is rescaled as $[0,1]$ in the new independent (time) variable $s$.  Note that, since $p_{i-1}\neq p_i$, we have $\tau_i > 0$, $i=1,\ldots,N$.  Let
\[
x^i(s) := x(t)\,,\ y^i(s) := y(t)\,,\ \theta^i(s) := \theta(t)\,,\ \mbox{and}\ u^i(s) := u(t)\,,\ \mbox{for } s\in[0,1],\ t\in[t_{i-1},t_{i}]\,,
\]
for $i = 1,\ldots,N$.  Here $x^i$ denotes the values of the state variable $x$ in stage $i$, and other stage variables are to be interpreted similarly.  The role of the superscript $i$ should be understood as an {\em index} rather than a {\em power}---this should become clear from the context. With the usage of stages one needs to pose constraints to ensure continuity of the state variables at the junction of any two consecutive stages:
\[
x^i(1) = x^{i+1}(0)\,,\quad y^i(1) = y^{i+1}(0)\,,\quad \theta^i(1) = \theta^{i+1}(0)\,,
\]
for $i = 1,\ldots,N-1$.  The resulting single-stage optimal control problem can now be written as
\[
\mbox{(Pmc)}\left\{\begin{array}{rll}
\min &\ \ds \sum_{i=1}^N\,\tau_i = \sum_{i=1}^N\,\int_0^1 \tau_i\,ds  & \\[5mm]
\mbox{s.t.} &\ \dot{x}^i(s) = \tau_i\,\cos\theta^i(s)\,, & x^i(0) = x_{i-1}\,,\ x^i(1) = x_i\,, \\[2mm] 
&\ \dot{y}^i(s) = \tau_i\,\sin\theta^i(s)\,, & y^i(0) = y_{i-1}\,,\ y^i(1) = y_i\,, \\[2mm]  
&\ \dot{\theta}^i(s) = \tau_i\,u^i(s)\,, & \theta^1(0) = \theta_0\,,\ \theta^N(1) = \theta_f\,, \\[2mm]
  & & |u^i(s)|\le a\,,\ \ \ i = 1,\ldots,N\,,\\[2mm]
  & & x^{j+1}(0) = x^j(1)\,,\ y^{j+1}(0) = y^j(1)\,, \\[2mm]
  & & \theta^{j+1}(0) = \theta^j(1)\,,\ \ \ j = 1,\ldots,N-1\,.
\end{array}\right.
\]
In what follows, we will state a maximum principle, i.e., necessary conditions of optimality, for Problem~(Pmc), using \cite[Theorem~3.1 and Corollary~3.1]{ClaVin1989} and \cite[Section~4]{AugMau2000} or~\cite{DmiKag2011}.  First, define the Hamiltonian function for the $i$th stage of Problem~(Pmc) as
\[
H^i(x^i,y^i,\theta^i, \lambda_0,\lambda^i_1,\lambda^i_2,\lambda^i_3,u^i) := \tau_i\left(\lambda_0
+ \lambda^i_1\,\cos\theta^i + \lambda^i_2\,\sin\theta^i + \lambda^i_3\,u^i\right),
\]
where $\lambda_0$ is a scalar (multiplier) parameter, and $\lambda_j^i:[0,1]\rightarrow\dR$, $j=1,2,3$, are the adjoint variables (or multiplier functions) in the $i$th stage.  Let
\[
H^i[s] :=
H^i(x^i(s),y^i(s),\theta^i(s), \lambda_0,\lambda^i_1(s),\lambda^i_2(s),\lambda^i_3(s),u^i(s))\,.
\]
Suppose that $x^i,y^i,\theta^i\in W^{1,\infty}(0,1;\dR)$, $u^i\in L^\infty(0,1;\dR)$, and $\tau_i\in[0,M)$, $i = 1,\ldots,N$, where $M$ is large enough so that $\max_i\tau_i<M-\varepsilon$ with $\varepsilon>0$, solve Problem~(Pmc).  Then there exist a number $\lambda_0\ge0$ and functions $\lambda_j^i\in W^{1,\infty}(0,1;\dR)$, $j=1,2,3$, such that $\lambda^i(s):= (\lambda_0,\lambda_1^i(s), \lambda_2^i(s), \lambda_3^i(s)) \neq \bf0$, for every $s\in[0,1]$, $i = 1,\ldots,N$, and, in addition to the state differential equations and other constraints given in Problem~(Pmc), the following conditions hold:
\begin{eqnarray}
&& \dot{\lambda}_1^i(s) = -H^i_{x}[s]\,,\ \dot{\lambda}_2^i(s) = -H^i_{y}[s]\,,\ \dot{\lambda}_3^i(s) = -H^i_{\theta}[s]\,,\mbox{ a.e. } s\in[0,1],\ i = 1,\ldots,N, \label{lambdax}  \\[1mm]
&& \lambda_j^{i+1}(0) =  \lambda_j^i(1) + \delta_j^i\,,\ \ j = 1,2,\ i = 1,\ldots,N-1, \label{l_jump}   \\[1mm]
&& \lambda_3^{i+1}(0) =  \lambda_3^i(1)\,,\ \ i = 1,\ldots,N-1, \label{lamb3_cont}   \\[1mm]
&& u^i(s)\in\argmin_{|v|\le a} H^i(x^i(s),y^i(s),\theta^i(s),\lambda_0,\lambda_1^i(s),\lambda_2^i(s),\lambda_3^i(s),v)\,,\ \mbox{a.e. } s\in[0,1]\,,  \label{controli} \\[1mm]
  && H^i[s] = 0\,,\ \ \mbox{for all } s\in[0,1]\,,\ \ i = 1,\ldots,N\,, \label{Hi_zero}
\end{eqnarray}
where $\delta_j^i$, $j = 1,2$, $i = 1,\ldots,N-1$, are real constants.  

Conditions~\eqref{lambdax}--\eqref{l_jump} state that the adjoint variables $\lambda_1^i(s)$ and $\lambda_2^i(s)$ are constant but  might have jumps as they go from one stage to the other.  On the other hand, the transversality condition~\eqref{lamb3_cont} asserts that  $\lambda_3$ is continuous at the junctions/nodes.

We define the "overall" adjoint variables $\lambda_j(t)$, $j=1,2,3$, formed by concatenating the stage adjoint variables, as follows.
\[
\lambda_j(t) := \lambda_j^i(s)\,,\quad  t = t_{i-1} + s\,\tau_i,\quad
s\in[0,1]\,,\quad \tau_i := t_i - t_{i-1}\,,\quad i = 1,\ldots,N\,.
\]
The optimality conditions \eqref{lambdax}--\eqref{Hi_zero} can now be re-written more explicitly, along with the state equations, as follows.
\begin{eqnarray}
 \dot{x}(t) &=& \cos\theta(t)\,,\quad x(0) = x_0\,,\ x(t_1) = x_1\,,\ldots,\ x(t_N) = x_f\,,\mbox{ for all } t\in[0,t_N]\,, \label{x_eqn} \\[1mm] 
 \dot{y}(t) &=& \sin\theta(t)\,,\quad y(0) = y_0\,,\ y(t_1) = y_1\,,\ldots,\ y(t_N) = y_f\,,\mbox{ for all } t\in[0,t_N]\,, \label{y-eqn} \\[1mm] 
 \dot{\theta}(t) &=& u(t)\,,\quad \theta(0) = \theta_0\,,\ \theta(t_N) = \theta_f\,,\mbox{ a.e. } t\in[0,t_N]\,, \label{theta-eqn} \\[1mm]
 \lambda_j(t) &=& \overline{\lambda}_j^i\,,\mbox{ for all } t\in[t_{i-1},t_i)\,,\ j = 1,2\,,\ i = 1,\ldots,N\,,\label{adjoint12}  \\[1mm]
 \dot{\lambda}_3(t) &=& \overline{\lambda}_1^i \,\sin\theta^i(t) - \overline{\lambda}_2^i \,\cos\theta^i(t)\,,\mbox{ for all } t\in[t_{i-1},t_i]\,,\ i = 1,\ldots,N\,,  \label{adjoint3_DE} \\[1mm]
\lambda_3(t_i^+) &=&  \lambda_3(t_i^-)\,,\ \ i = 1,\ldots,N-1, \label{lamb3_cont2}   \\[1mm]
u(t) &=& \left\{\begin{array}{ll}
\ \ a\,, & \mbox{if}\  \lambda_3(t) < 0\,, \\[3mm]
-a\,, & \mbox{if}\ \lambda_3(t) > 0\,, \\[3mm]
\mbox{undetermined}\,, & \mbox{if}\ \lambda_3(t) = 0\,,\mbox{ a.e. } t\in[t_{i-1},t_i)\,,\ i = 1,\ldots,N\,,
\end{array}\right. \label{uint} \\[1mm]
 0 &=& \lambda_0 + \overline{\lambda}^i_1\,\cos\theta(t) + \overline{\lambda}^i_2\,\sin\theta(t) + \lambda_3(t)\,u(t),\mbox{ for all } t\in[t_{i-1},t_i),\ i = 1,\ldots,N,  \label{Hizero}
\end{eqnarray}
where $\overline{\lambda}_j^i$, $i = 1,\ldots,N$, $j = 1,2$, are real constants, and we have also used the fact that $\tau_i > 0$ and the continuity condition in~\eqref{lamb3_cont}.  In the continuity condition~\eqref{lamb3_cont2}, $\lambda_3(t_i^+) := \lim_{t\to t_i^+}\lambda_3(t)$ and $\lambda_3(t_i^-) := \lim_{t\to t_i^-}\lambda_3(t)$.  Define the new constants $\rho_i$ and $\phi_i$ as
\[
\rho_i := \sqrt{\left(\overline{\lambda}_1^i\right)^2 + \left(\overline{\lambda}_2^i\right)^2}\,,\qquad 
\tan\phi_i := \frac{\overline{\lambda}_2^i}{\overline{\lambda}_1^i}\,,\ \ i = 1,\ldots,N\,.
\]
Then Equation \eqref{adjoint3_DE} can be re-written, for all $t\in[t_{i-1},t_i]$\,, $i = 1,\ldots,N$, as
\begin{equation}  \label{adjoint3_DE2}
\dot{\lambda}_3(t) = \rho_i\,\sin(\theta(t) - \phi_i)\,,
\end{equation}
and \eqref{Hizero} as
\begin{equation}  \label{Hzero2}
\lambda_3(t)\,u(t) + \rho_i\,\cos(\theta(t) - \phi_i) + \lambda_0 = 0\,.
\end{equation}
It should be noted that the adjoint variable $\lambda_3$ is nothing but the {\em switching function} for the optimal control $u$.

\section{Markov--Dubins Interpolating Curves}
\label{Markov-Dubins_int}

Observe that for the classical Markov--Dubins problem, for which $N=1$, the necessary conditions of optimality~\eqref{x_eqn}--\eqref{Hzero2}, excluding the continuity condition \eqref{lamb3_cont2}, are identical to those given in~\cite{Kaya2017}.  In the $i$th stage of the interpolation problem, i.e., when $t_{i-1} < t < t_i$, $i = 1,\ldots,N$, the necessary conditions of optimality~\eqref{x_eqn}--\eqref{Hizero}, except \eqref{lamb3_cont2} and the free-end conditions for one or both of $\theta(t_{i-1})$ and $\theta(t_i)$, are the same as those of the Markov--Dubins problem.  In Lemmas~\ref{singular_int}--\ref{straight_int}, \ref{abnormal_int}(a) and \ref{CCCC_int} for stage~$i$, $i = 1,\ldots,N$, that we present in this section, the proofs are similar to those of Lemmas~1--7 in~\cite{Kaya2017}.  Therefore, we provide the proofs of the new lemmas here as a broad summary of the related proofs in~\cite{Kaya2017}, but we refer to the particular conditions and definitions we give in~\eqref{x_eqn}--\eqref{Hzero2}, and of course use the terminology of the interpolation problem we study in this paper.  Lemma~\ref{abnormal_int}(b) and Theorem~\ref{Dubins_int} deal with the more general case of Markov--Dubins interpolating curves.

The lemma presented below collects together the companions of Lemmas~1 and 2 in~\cite{Kaya2017}.
\newpage
\begin{lemma}[Singular Interpolant Segments]  \label{singular_int}
Suppose that optimal control $u(t)$ for Problem~{\em (Pc)} is singular over some interval $[\zeta_1,\zeta_2)\subset[t_{i-1},t_i)$.  Then         \\[-8mm]
\begin{itemize}
\item[(a)] $\rho_i = \lambda_0 > 0$, i.e., the problem is normal.
\item[(b)] $\theta(t)$ is constant, i.e., $u(t) = 0$, for all $t\in[\zeta_1,\zeta_2)$.
\end{itemize}
\end{lemma}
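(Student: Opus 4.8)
The plan is to exploit the two stage-$i$ identities that govern the switching function, namely the adjoint dynamics \eqref{adjoint3_DE2} and the vanishing-Hamiltonian condition \eqref{Hzero2}, together with the nontriviality of the multiplier. By the control characterization \eqref{uint}, the control $u$ being singular on $[\zeta_1,\zeta_2)$ means precisely that the switching function vanishes there, i.e.\ $\lambda_3(t)=0$ for all $t\in[\zeta_1,\zeta_2)$. First I would note that this forces $\dot\lambda_3(t)=0$ on the same interval, so \eqref{adjoint3_DE2} yields $\rho_i\sin(\theta(t)-\phi_i)=0$, while \eqref{Hzero2} collapses to $\rho_i\cos(\theta(t)-\phi_i)+\lambda_0=0$. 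These two relations are the whole engine of the argument.

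For part (a), I would first rule out $\rho_i=0$. If $\rho_i=0$ then $\overline{\lambda}_1^i=\overline{\lambda}_2^i=0$ by the definition of $\rho_i$, and the collapsed Hamiltonian identity forces $\lambda_0=0$; since $\lambda_3\equiv0$ on the interval as well, the entire multiplier $(\lambda_0,\overline{\lambda}_1^i,\overline{\lambda}_2^i,\lambda_3(t))$ would vanish at interior points, contradicting the nontriviality condition $\lambda^i(s)\neq\mathbf{0}$. Hence $\rho_i>0$. With $\rho_i>0$, the relation $\rho_i\sin(\theta-\phi_i)=0$ gives $\cos(\theta(t)-\phi_i)=\pm1$, while $\cos(\theta(t)-\phi_i)=-\lambda_0/\rho_i\le0$ (using $\lambda_0\ge0$) selects the value $-1$, the $+1$ case being excluded as it would force $\lambda_0=-\rho_i<0$. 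Substituting back gives $\rho_i=\lambda_0$, and therefore $\lambda_0=\rho_i>0$; the problem is normal.

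Part (b) then follows almost immediately. Having established $\cos(\theta(t)-\phi_i)=-1$ throughout $[\zeta_1,\zeta_2)$ and recalling that $\theta$ is continuous, the angle must equal the constant $\phi_i+\pi$ (modulo $2\pi$) on the whole interval, so $u(t)=\dot\theta(t)=0$ there. Equivalently, I would differentiate $\dot\lambda_3\equiv0$ once more to obtain $\rho_i\cos(\theta-\phi_i)\,\dot\theta=0$, and since $\rho_i\cos(\theta-\phi_i)=-\lambda_0<0$ is nonzero, conclude $\dot\theta=0$.

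The only genuinely delicate point is the invocation of nontriviality to exclude $\rho_i=0$: one must be careful that $\lambda_3$ vanishing on the interval, combined with $\rho_i=0$ and $\lambda_0=0$, really does annihilate the full multiplier vector at interior points, since the maximum principle only guarantees $\lambda^i(s)\neq\mathbf{0}$ pointwise. Beyond that, the argument is a short trigonometric case analysis, and I expect it to mirror the single-stage treatment of Lemmas~1 and 2 in \cite{Kaya2017}, the interpolation setting entering only through the stage-dependent constants $\rho_i$, $\phi_i$, $\overline{\lambda}_1^i$ and $\overline{\lambda}_2^i$.
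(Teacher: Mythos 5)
Your proof is correct and follows essentially the same route as the paper, which itself defers to Lemmas~1 and~2 of \cite{Kaya2017}: equate the right-hand side of \eqref{adjoint3_DE2} to zero on the singular interval, split into the cases $\rho_i=0$ (excluded via \eqref{Hzero2} and the pointwise nontriviality of the multiplier) and $\sin(\theta-\phi_i)=0$, then use \eqref{Hzero2} to select $\cos(\theta-\phi_i)=-1$, giving $\rho_i=\lambda_0>0$ and, by constancy of $\theta$, the singular control $u\equiv 0$. Your explicit handling of the nontriviality argument is exactly the step the paper leaves implicit in its citation.
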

\begin{proof}
The proof of part~(a) is furnished after equating the right-hand side of \eqref{adjoint3_DE2} to zero and considering each of the subsequent cases, similarly as in the proof of Lemma~1 in~\cite{Kaya2017}. The proof of part~(b), on the other hand, is obtained by making use of \eqref{adjoint3_DE2} and \eqref{Hzero2}, along lines similar to those in the proof of Lemma~2 in~\cite{Kaya2017}.
\end{proof}

\begin{remark} \label{rem:optcontr} \rm
From \eqref{uint} and Lemma~\ref{singular_int}(b), the optimal control can simply be written as $u(t) = -a\,\sgn(\lambda_3(t))$, a.e. $t\in[0,t_N]$.
\endproof
\end{remark}

The following lemma is a companion of Lemma~3 in~\cite{Kaya2017}.

\begin{lemma} [Differential Equation in \boldmath$\lambda_3$] \label{lambda3_lemma_int}
The adjoint variable $\lambda_3$ for Problem~{\em (Pc)} solves the differential equation
\begin{equation} \label{lambda3_DE_int} 
\dot{\lambda}_3^2(t) + \left(a\,|\lambda_3(t)| - \lambda_0\right)^2 = \rho_i^2\,,\mbox{ for all } t\in[t_{i-1},t_i)\,,\ i = 1,\ldots,N\,. 
\end{equation}
\end{lemma}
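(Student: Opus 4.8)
The plan is to combine the two pointwise relations already in hand for stage~$i$, namely the formula for $\dot{\lambda}_3$ in \eqref{adjoint3_DE2} and the vanishing-Hamiltonian condition \eqref{Hzero2}, after first eliminating the control $u$ by means of the bang--bang/singular characterization recorded in Remark~\ref{rem:optcontr}. The whole identity \eqref{lambda3_DE_int} is then nothing more than a disguised Pythagorean relation for the angle $\theta(t) - \phi_i$.

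First I would invoke Remark~\ref{rem:optcontr}, which gives $u(t) = -a\,\sgn(\lambda_3(t))$ for almost every $t$. The key algebraic observation is that this yields
\[
\lambda_3(t)\,u(t) = -a\,|\lambda_3(t)|
\]
almost everywhere: on a bang arc it follows directly from the sign rule in \eqref{uint}, while on any singular arc one has $\lambda_3\equiv 0$, so both sides vanish (here the control value $u=0$ supplied by Lemma~\ref{singular_int}(b) is consistent but not even needed, since $\lambda_3=0$ already forces the product to zero). Substituting this expression into \eqref{Hzero2} eliminates the control and leaves
\[
\rho_i\,\cos(\theta(t) - \phi_i) = a\,|\lambda_3(t)| - \lambda_0\,,
\]
valid for all $t\in[t_{i-1},t_i)$.

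Next I would square this last identity and separately square \eqref{adjoint3_DE2}, producing $\left(a\,|\lambda_3| - \lambda_0\right)^2 = \rho_i^2\,\cos^2(\theta - \phi_i)$ and $\dot{\lambda}_3^2 = \rho_i^2\,\sin^2(\theta - \phi_i)$. Adding the two and applying $\sin^2 + \cos^2 = 1$ collapses the right-hand side to $\rho_i^2$, which is precisely \eqref{lambda3_DE_int}. The computation is routine, so no step constitutes a genuine obstacle; the only point deserving care is the non-smooth behaviour of $|\lambda_3|$, which means $\lambda_3$ is merely continuous (not $\mathcal{C}^1$) where it changes sign, so the identity is understood to hold wherever $\dot{\lambda}_3$ exists. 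On singular pieces Lemma~\ref{singular_int}(a) guarantees $\rho_i = \lambda_0$, and the relation degenerates gracefully to $\dot{\lambda}_3 = 0$, consistent with $\lambda_3\equiv 0$ there.
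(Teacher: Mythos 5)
Your proposal is correct and follows essentially the same route as the paper's own (sketched) proof: eliminate $u$ via Remark~\ref{rem:optcontr} so that $\lambda_3 u = -a|\lambda_3|$ in \eqref{Hzero2}, square the resulting relation together with \eqref{adjoint3_DE2}, and add them using $\sin^2+\cos^2=1$. Your explicit handling of the singular case and of the points where $|\lambda_3|$ is not differentiable fills in exactly the ``simple manipulations'' the paper leaves implicit.
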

\begin{proof}
The proof is obtained, after squaring both sides of \eqref{adjoint3_DE2}, Equation~\eqref{Hzero2}, Remark~\ref{rem:optcontr} and simple manipulations, along lines similar to those provided in the proof of Lemma~3 in~\cite{Kaya2017}.
\end{proof}

In a solution trajectory, we will denote a straight line segment (i.e., a singular arc, where $u(t) = 0$) by an $S$  and a circular arc segment of curvature $a$ (i.e., a nonsingular arc, where $u(t) = a$ or $-a$) by a $C$, resulting in descriptions of {\em optimal paths} to be {\em of type}, for example, $CSCC\cdots$, $SCS\cdots$, etc., representing concatenations of $S$ and $C$ type arcs.

In the rest of the paper, we will at times not show dependence of variables on $t$ for clarity of presentation.

\begin{remark}[Normal and Abnormal Phase Portraits] \rm \label{remark_int}
The differential equation in \eqref{lambda3_DE_int}, which is given in terms of the phase variables $\lambda_3$ and $\dot{\lambda}_3$, i.e., the switching function $\lambda_3$ and its derivative $\dot{\lambda}_3$, can be put into the form 
\begin{equation} \label{lambda3_DE_ellipse_int} 
\left(\lambda_3 \pm \frac{\lambda_0}{a}\right)^2 + \frac{\dot\lambda_3^2}{a^2} = \frac{\rho_i^2}{a^2}\,,
\end{equation}
when the optimal control is nonsingular, i.e., when $u(t) = \pm a\neq 0$.  Note that \eqref{lambda3_DE_ellipse_int} is akin to Equation (15) in~\cite{Kaya2017}, and the phase portrait for a given stage $i$, $i = 1,\ldots,N$, is depicted as in the case of the Markov--Dubins problem in~\cite{Kaya2017}: see the trajectories in Figure~\ref{phase_int}(a) for the normal case, $\lambda_0 > 0$, and Figure~\ref{phase_int}(b) for the abnormal case, $\lambda_0 = 0$.
\begin{figure}[t]
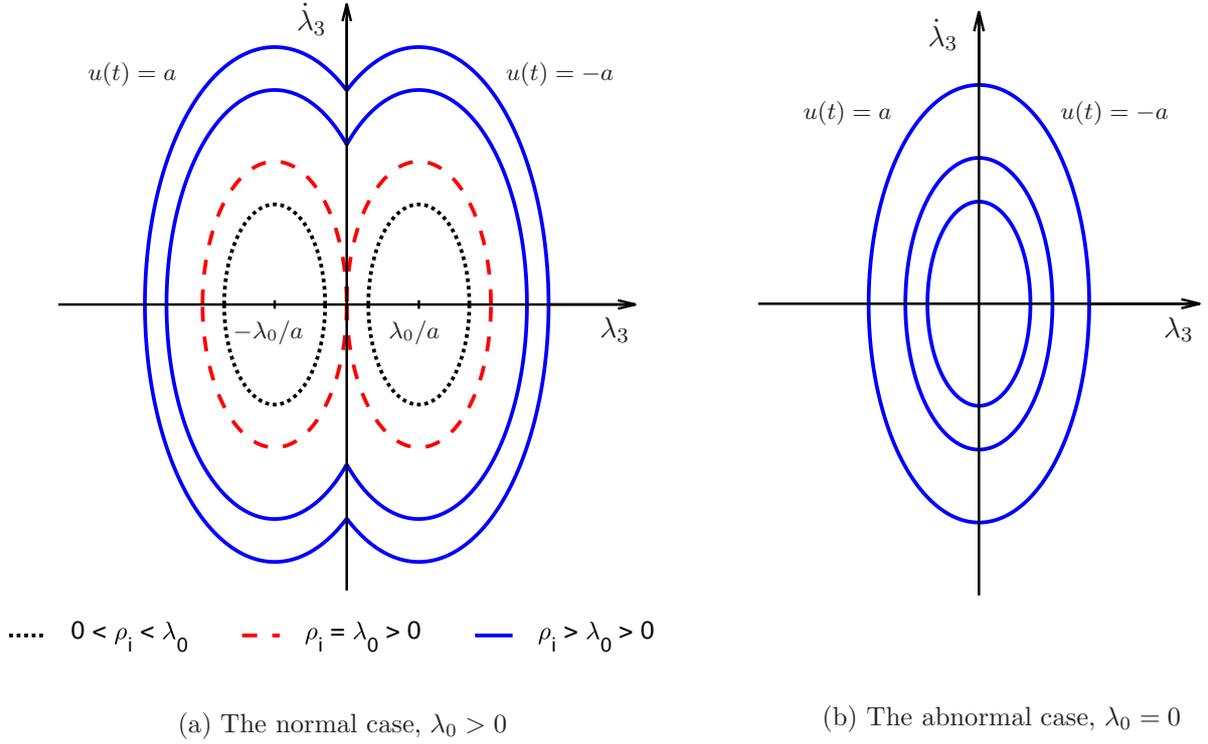

\begin{minipage}{90mm}
\begin{center}
\psfrag{L}{$\lambda_3$}
\psfrag{Ld}{\hspace*{-1mm}$\dot{\lambda}_3$}
\psfrag{b}{\footnotesize $-\lambda_0/a$}
\psfrag{c}{\footnotesize $\lambda_0/a$}
\psfrag{u1}{\footnotesize $u(t) = a$}
\psfrag{u2}{\footnotesize $u(t) = -a$}
\hspace*{-20mm}
\includegraphics[width=125mm]{ellipses_int.eps} \\[-1mm]
{\small (a) The normal case, $\lambda_0 > 0$}
\end{center}
\end{minipage}
\hspace*{0mm}
\begin{minipage}{80mm}
\vspace*{0mm}
\begin{center}
\psfrag{L}{$\lambda_3$}
\psfrag{Ld}{\hspace*{-1mm}$\dot{\lambda}_3$}
\psfrag{u1}{\footnotesize $u(t) = a$}
\psfrag{u2}{\footnotesize $u(t) = -a$}
\hspace*{-10mm}
\includegraphics[width=80mm]{ellipses_abnormal_int.eps} \\[-1mm]
{\small (b) The abnormal case, $\lambda_0 = 0$}
\end{center}
\end{minipage}
\
\caption{\small\sf Phase portrait of the differential equation in \eqref{lambda3_DE_int} in stage $i$.}
\label{phase_int}
\end{figure}

The phase portrait trajectories for the normal case in Figure~\ref{phase_int}(a) can be classified into three groups based on the relationship between $\rho_i$ and $\lambda_0$, in stage $i$, $i = 1,\ldots,N$:
\begin{itemize}
\item[(i)] $\rho_i > \lambda_0 > 0$: The optimal control is of bang--bang type, including only circular arcs, i.e., the optimal interpolating path in stage $i$ is of type $CC\cdots C$ or a subset thereof.  The phase portrait trajectories are concatenations of (pieces of) ellipses, examples of which are shown by (dark blue) solid curves.  It is not difficult to deduce from the portrait that if the bang--bang optimal control in a stage has two switchings then the second arc must have a length strictly greater than $\pi/a$.  The portrait, however, does not indicate how many switchings the bang--bang optimal control in a stage must have.
\item[(ii)] $\rho_i = \lambda_0 > 0$: The optimal control is of bang--singular--bang type, including circular arcs and straight lines such that the optimal interpolating path in stage $i$ is of type $CSCSC\cdots CSC$ or a subset thereof.  The phase portrait trajectory is represented by the two unique (red) dashed elliptic curves concatenated at the origin $(0,0)$.  Note that singular control, which corresponds to a straight line interpolant segment, takes place only at the origin $(0,0)$ of the phase plane. As in case~(i), the portrait does not indicate how many switchings the optimal control in a stage will have.
\item[(iii)] $0 < \rho_i < \lambda_0$: The optimal control is bang--bang, comprised of a single circular arc, i.e., the optimal interpolating path in stage $i$ is of type $C$. The phase portrait trajectories are ellipses, examples of which are shown by (black) dotted curves, along which, either $u(t) = a$ or $u(t) = -a$.  Since these ellipses never cross the $\dot{\lambda}_3$ axis, the interpolant path is of type $C$ in stage $i$.
\end{itemize}
Figure~\ref{phase_int}(b) depicts the phase portrait for the abnormal case, $\lambda_0 = 0$, in a stage, from which it is obvious to see that the abnormal optimal control is bang--bang.  Just like Figure~\ref{phase_int}(a), Figure~\ref{phase_int}(b) does not convey further information as to how many switchings the optimal control must have in a given stage.

In summary, for any given stage $i$, where $\rho_i$ is a constant associated with stage~$i$, elliptic trajectories in the phase portraits are suitably concatenated.  The phase plane trajectories pass through the origin if and only if the optimal path contains a straight line, i.e., if the optimal control in stage $i$ is singular over some interval $[\zeta_1,\zeta_2)\subset[t_{i-1},t_i)$.  Each sequence of concatenated elliptic curves, for all $t\in[t_{i-1},t_i)$, corresponds to a fixed $\rho_i$.  In general, $\rho_{i+1}\neq\rho_i$, and this corresponds to a "vertical" jump (up or down) to a different ellipse at $t=t_{i+1}$, with $\lambda_3(t_{i+1}) = \lambda_3(t_i)$ and $\dot{\lambda}_3(t_{i+1}) \neq \dot{\lambda}_3(t_i)$, in general.  
\endproof
\end{remark}

The following lemma is a companion of Lemma~4 in~\cite{Kaya2017}.
\begin{lemma} \label{nonsingular_int}
Suppose that optimal control $u(t)$ for Problem~{\em (Pc)} is nonsingular over a subinterval $[\zeta_3,\zeta_4)\subset[t_{i-1},t_i)$.  Then
\begin{equation} \label{lambda3_int} 
|\lambda_3(t)| = \frac{1}{a}\,\left[\rho_i\,\cos(\theta(t) - \phi_i) + \lambda_0\right],\mbox{\ \ a.e. } t\in[\zeta_3,\zeta_4)\subset[t_{i-1},t_i). 
\end{equation}
\end{lemma}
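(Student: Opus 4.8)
The plan is to derive \eqref{lambda3_int} directly from the Hamiltonian vanishing condition \eqref{Hzero2} once we exploit the nonsingularity of the control on $[\zeta_3,\zeta_4)$. Since the control is nonsingular on this subinterval, by \eqref{uint} we have $\lambda_3(t)\neq 0$ a.e. there, and therefore $u(t)=\pm a$. By Remark~\ref{rem:optcontr}, the optimal control can be written as $u(t)=-a\,\sgn(\lambda_3(t))$, which gives the key algebraic identity $\lambda_3(t)\,u(t)=-a\,|\lambda_3(t)|$ a.e. on $[\zeta_3,\zeta_4)$.

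First I would substitute this identity into the condition \eqref{Hzero2}, namely
\begin{equation} \label{plan_subst}
\lambda_3(t)\,u(t)+\rho_i\,\cos(\theta(t)-\phi_i)+\lambda_0=0\,,
\end{equation}
which immediately becomes
\begin{equation} \label{plan_result}
-a\,|\lambda_3(t)|+\rho_i\,\cos(\theta(t)-\phi_i)+\lambda_0=0\,.
\end{equation}
Solving \eqref{plan_result} for $|\lambda_3(t)|$ then yields \eqref{lambda3_int} after dividing by $a>0$. The only care needed is that \eqref{Hzero2} holds for all $t\in[t_{i-1},t_i)$ while Remark~\ref{rem:optcontr} and the sign identity hold a.e. (on the set where $\lambda_3\neq 0$), so the conclusion is stated a.e. on the subinterval, consistent with the lemma's ``a.e.'' qualification.

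Since this is essentially a one-line substitution, I expect no real obstacle; the computation is routine and parallels the proof of Lemma~4 in~\cite{Kaya2017}. The only subtlety worth flagging is justifying that $u(t)=-a\,\sgn(\lambda_3(t))$ is the correct expression on the nonsingular interval, which relies on having already disposed of the singular case in Lemma~\ref{singular_int}(b)---this is exactly why Remark~\ref{rem:optcontr} is invoked. Thus the proof reduces to citing \eqref{Hzero2}, Remark~\ref{rem:optcontr}, and performing the substitution, mirroring the corresponding argument in~\cite{Kaya2017}.
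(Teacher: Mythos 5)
Your proposal is correct and is essentially identical to the paper's own proof, which likewise substitutes $u(t) = -a\,\sgn(\lambda_3(t))$ into \eqref{Hzero2} and rearranges to obtain \eqref{lambda3_int}. Your additional remarks on the a.e.\ qualification and on Remark~\ref{rem:optcontr} resting on Lemma~\ref{singular_int}(b) are accurate but do not change the argument.
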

\begin{proof}
Substitution of $u(t) = -a\,\sgn(\lambda_3(t))$ and a re-arragement of the terms in \eqref{Hzero2} yield \eqref{lambda3_int}.
\end{proof}

The lemma given below is a companion of Lemma~5 in~\cite{Kaya2017}.
\begin{lemma}[Nonsingular Interpolant Segments]  \label{rho_nonsing_int}
Consider Problem~{\em (Pc)} and the necessary conditions of optimality for it.  \\[-6mm]
\begin{enumerate}
\item[(a)] If $\rho_i = 0$ for some $i = 1,\ldots,N$, then $\lambda_0 > 0$ and either $u(t) = a$ or $u(t) = -a$, for all $t\in[t_{i-1},t_i]$.
\item[(b)] If $\rho_i > 0$ for some $i = 1,\ldots,N$, and $\rho_i\neq\lambda_0$, then $\lambda_0 \ge 0$ and $u(t)$ is bang--bang type over the interval $[t_{i-1},t_i]$.
\end{enumerate}
\end{lemma}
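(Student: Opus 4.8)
The plan is to read both parts off the scalar differential equation~\eqref{lambda3_DE_int} for the switching function $\lambda_3$, together with the nontriviality of the multiplier $(\lambda_0,\lambda_1^i,\lambda_2^i,\lambda_3^i)\neq\mathbf{0}$ and the pointwise control law~\eqref{uint}. Geometrically this is just the phase portrait of Remark~\ref{remark_int}: part~(a) is the degenerate case $\rho_i=0$, for which the ellipse~\eqref{lambda3_DE_ellipse_int} collapses to the single equilibrium point $(\lambda_3,\dot\lambda_3)=(\pm\lambda_0/a,0)$, while part~(b) asserts that away from the critical value $\rho_i=\lambda_0$ the trajectory cannot linger at the origin, so no singular (straight-line) segment can occur.

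For part~(a) I would set $\rho_i=0$ in~\eqref{lambda3_DE_int}, so that $\dot\lambda_3^2+(a\,|\lambda_3|-\lambda_0)^2=0$. I first establish normality: if $\lambda_0=0$ this identity forces $\dot\lambda_3\equiv0$ and $\lambda_3\equiv0$ throughout the stage, but $\rho_i=0$ already means $\overline{\lambda}_1^i=\overline{\lambda}_2^i=0$, so the entire multiplier would vanish, contradicting nontriviality; hence $\lambda_0>0$. Returning to the identity with $\lambda_0>0$, both nonnegative squares must vanish separately, giving $\dot\lambda_3\equiv0$ and $|\lambda_3|=\lambda_0/a>0$. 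Since $\lambda_3$ is (Lipschitz) continuous on the stage and never zero, it keeps a fixed sign, and~\eqref{uint} then yields the single constant control $u\equiv-a$ if $\lambda_3\equiv+\lambda_0/a$, or $u\equiv a$ if $\lambda_3\equiv-\lambda_0/a$, on all of $[t_{i-1},t_i]$.

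For part~(b), the bound $\lambda_0\ge0$ is already part of the maximum principle, so nothing further is needed there. To obtain the bang--bang conclusion I would exclude a singular subarc: were $u$ singular on some subinterval of $[t_{i-1},t_i)$, Lemma~\ref{singular_int}(a) would force $\rho_i=\lambda_0$, contradicting the hypothesis $\rho_i\neq\lambda_0$. Hence the control is nonsingular throughout, and by~\eqref{uint} it equals $\pm a$ wherever $\lambda_3\neq0$.

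The step that needs genuine care is the measure-theoretic passage from ``no singular subarc'' to a control that is bang--bang in the usual sense, namely $u=\pm a$ for almost every $t$; this amounts to showing that the zero set $Z:=\{t\in[t_{i-1},t_i):\lambda_3(t)=0\}$ is null. Here I would invoke that $\lambda_3$ is absolutely continuous, so $\dot\lambda_3=0$ almost everywhere on $Z$; consequently, if $Z$ had positive measure there would exist a point at which $\lambda_3=\dot\lambda_3=0$, and~\eqref{lambda3_DE_int} would then give $\lambda_0^2=\rho_i^2$, i.e.\ $\lambda_0=\rho_i$ (both being nonnegative constants)---again contradicting $\rho_i\neq\lambda_0$. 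Thus $Z$ is null and $u=\pm a$ a.e. This is the only place where the hypothesis $\rho_i\neq\lambda_0$ is essentially used, and the remainder parallels the proof of Lemma~5 in~\cite{Kaya2017}.
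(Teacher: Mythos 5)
Your proposal is correct and takes essentially the same route as the paper, which proves part~(a) from nontriviality of the multiplier together with $|\lambda_3|=\lambda_0/a$ (the paper reads this off \eqref{lambda3_int}, you off \eqref{lambda3_DE_int}---an immaterial difference), and part~(b) from Lemma~\ref{singular_int}(a) combined with \eqref{lambda3_DE_int}, exactly the ingredients you use. Your measure-theoretic treatment of the zero set of $\lambda_3$ is just a careful rendering of the phase-portrait observation underlying the paper's argument: for $\rho_i\neq\lambda_0$ the trajectory of $(\lambda_3,\dot\lambda_3)$ never passes through the origin, so zeros of $\lambda_3$ are transversal (hence isolated) crossings.
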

\begin{proof}
The proof is furnished similarly as in Lemma~5 in~\cite{Kaya2017}: Part~(a) is proved using \eqref{uint}, \eqref{Hzero2}, and \eqref{lambda3_int} in Lemma~\ref{nonsingular_int}, and  part~(b) is proved using Lemma~\ref{singular_int}(a), and \eqref{lambda3_DE_int} in Lemma~\ref{lambda3_lemma_int}.
\end{proof}

\begin{remark}[Abnormal Interpolants] \rm \label{rem:abnormal}
From Lemma~\ref{rho_nonsing_int}(a), if $\rho_i = 0$ for some $i = 1,\ldots,N$,  then the problem is normal and the optimal control is the constant value $a$ or $-a$.  Otherwise, if $\rho_i \neq 0$ for some $i = 1,\ldots,N$, then, by Lemma~\ref{rho_nonsing_int}(b), an abnormal solution is entirely possible, i.e., one might have that $\lambda_0 = 0$.  Since $\lambda_0$ is the same value in each stage, if $\lambda_0 = 0$, then the solution is abnormal in every single stage.  On the other hand, if the interpolating curve has a straight line segment at any stage, then, by Lemma~\ref{singular_int}(a), the whole curve has to be normal.  Note that, in the abnormal case, Equation~\eqref{lambda3_DE_int} reduces to
\begin{equation} \label{lambda3_abnormal_int} 
a^2\,\lambda_3^2(t) + \dot{\lambda}_3^2(t) = \rho_i^2\,,\mbox{ for all } t\in[t_{i-1},t_i)\,,\ i = 1,\ldots,N\,. 
\end{equation}
So, Figure~\ref{phase_int}(b) illustrates the phase portrait of $\lambda_3$ by the concentric ellipses for a single stage.  It must however be noted that, for different values of $\rho_i$, the trajectories in the phase plane will lie in a different ellipse in each stage $i$, with jumps from one ellipse to another at the junctions/nodes, i.e., at $t_i$, $i=1,\ldots,N-1$.
\endproof
\end{remark}

The following lemma is a companion of Lemma~6 in~\cite{Kaya2017}.
\begin{lemma}[Straight Line Interpolant Segments] \label{straight_int} 
Consider Problem~{\em (Pc)}.  If an optimal path over the interval $[t_{i-1},t_i]$ in stage~$i$, $i = 1,\ldots,N$, contains a straight line segment $S$, then it is of type $CSC$, $CS$, $SC$ or $S$.
\end{lemma}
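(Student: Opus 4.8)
The plan is to read off the stagewise structure from the phase portrait of Remark~\ref{remark_int} and then excise wasteful loops by a length comparison. Since the optimal path contains a straight-line segment $S$, the control is singular on some subinterval of $[t_{i-1},t_i)$, so Lemma~\ref{singular_int}(a) forces $\rho_i=\lambda_0>0$. This places stage~$i$ in case~(ii) of Remark~\ref{remark_int}: the trajectory of $(\lambda_3,\dot\lambda_3)$ lives on the two ellipses $(\lambda_3\pm\lambda_0/a)^2+\dot\lambda_3^2/a^2=\lambda_0^2/a^2$, which are mutually tangent at the origin. The decisive structural fact, read off from \eqref{lambda3_DE_ellipse_int} with $\rho_i=\lambda_0$, is that on each ellipse the switching function $\lambda_3$ vanishes \emph{only} at the origin, where $\dot\lambda_3=0$ as well; hence every zero of $\lambda_3$ is a singular ($S$) contact, and there are no transversal bang--bang switchings in this stage.

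Next I would show that any $C$ arc lying strictly between two $S$ segments is a full circle. Such an arc starts and ends at the origin and has $\lambda_3\neq0$ in between, so it stays on a single ellipse (switching ellipses would require $\lambda_3=0$ away from the origin, which is impossible). Using Lemma~\ref{nonsingular_int} with $\rho_i=\lambda_0$, namely $|\lambda_3|=(\lambda_0/a)[\cos(\theta-\phi_i)+1]$, one sees that $\lambda_3=0$ holds exactly when $\theta-\phi_i\equiv\pi\ (\mathrm{mod}\ 2\pi)$; so leaving the origin and returning to it forces $\theta$ to change by exactly $2\pi$. As $u=\pm a$ is constant along the arc, the vehicle traces a full circle of radius $1/a$, returning to the same point with the same heading.

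I would then rule such full circles out by excision. If the stage contained $\cdots S\,C\,S\cdots$ with $C$ a full circle, the two flanking straight pieces would meet at a common point with a common direction (the loop preserving both position and heading), so deleting the loop yields a \emph{feasible} curve that still honours the node data at $t_{i-1}$ and $t_i$—because the loop is interior to the stage—yet is shorter by $2\pi/a$, contradicting optimality. Hence no $C$ arc can separate two $S$ segments, and after merging abutting straight pieces at most one $S$ segment survives. The same ``$\lambda_3=0$ only at the origin'' fact then bounds the flanking arcs: from the start of the stage to its first origin contact the trajectory runs along a single ellipse with no intermediate zero of $\lambda_3$, so it is one $C$ arc, and symmetrically after the $S$. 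Thus the stage is of type $CSC$, or a subset $CS$, $SC$, or $S$, as claimed.

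The delicate point is the excision step. One must verify that removing an interior full circle keeps the whole trajectory feasible for Problem~(Pc)—in particular that continuity of position and tangent at the nodes is undisturbed—and this is precisely where it matters that the loop lies in the interior of a single stage and that a full circle restores both the spatial point and the heading $\theta$. The remaining ingredients (the phase-plane classification and the arc-length count along one ellipse) are direct consequences of Lemmas~\ref{singular_int}, \ref{lambda3_lemma_int}, and \ref{nonsingular_int} already established above.
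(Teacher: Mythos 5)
Your proof is correct and takes essentially the same route as the paper: the paper's own (very brief) proof defers to Lemma~6 of~\cite{Kaya2017}, whose argument is exactly your phase-plane analysis for the case $\rho_i=\lambda_0$---singular contacts only at the origin, any intervening $C$ arc being a full circle, and excision of such circles by a length/optimality comparison. Your added care that the excised loop is interior to a single stage (so the node data are undisturbed) is precisely the adaptation needed in the interpolation setting and is consistent with the paper's appeal to Figure~\ref{phase_int}(a) and Remark~\ref{remark_int}.
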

\begin{proof}
Note that the lemma is given for a single stage, and the phase plane diagram of $\lambda_3$ in this stage is given in Figure~\ref{phase_int}(a).  So, we have the same setting as that for Lemma~6 in~\cite{Kaya2017}.  Therefore, the proof can be furnished along lines similar to those in Lemma~6 in~\cite{Kaya2017}, this time by using Figure~\ref{phase_int}(a) and Remark~\ref{remark_int}.
\end{proof}

Part~(a) of the lemma below is a companion of Lemma~7 in~\cite{Kaya2017}.
\newpage
\begin{lemma}[Abnormal Markov--Dubins Interpolating Curves] \label{abnormal_int} \ \\[-6mm]
\begin{enumerate}
\item[(a)]  An abnormal optimal path for Problem~{\em (Pc)} over the interval $[t_{i-1},t_i]$ in stage $i$,\linebreak $i = 1,\ldots,N$, is either of type $CC$ or $C$, with respective lengths of at most $2\pi/a$ and $\pi/a$.
\item[(b)]  An abnormal optimal path for Problem~{\em (Pc)} is of type $CC\cdots C$, with at least $N$ and at most $2N$ copies of $C$ concatenated, resulting in the length of the path to be at most $2N\pi/a$.
\end{enumerate}
\end{lemma}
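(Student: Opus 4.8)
The plan is to prove part (a) by a single-stage analysis mirroring Lemma~7 in \cite{Kaya2017}, and then to obtain part (b) by assembling the stages. For part~(a) I would first note that abnormality forces $\lambda_0=0$, so by Lemma~\ref{rho_nonsing_int}(a) we cannot have $\rho_i=0$ (that would force $\lambda_0>0$); hence $\rho_i>0$, and since a singular arc would require $\rho_i=\lambda_0=0$ by Lemma~\ref{singular_int}(a), the control in stage~$i$ is bang--bang. The governing phase portrait is then \eqref{lambda3_abnormal_int}, the family of concentric ellipses in Figure~\ref{phase_int}(b). Using \eqref{lambda3_int} with $\lambda_0=0$, namely $|\lambda_3|=(\rho_i/a)\cos(\theta-\phi_i)$, the switching points $\lambda_3=0$ occur exactly where $\theta-\phi_i=\pi/2+k\pi$, so between two consecutive switches $\theta$ changes by precisely $\pi$ and each such subarc has length exactly $\pi/a$, while the two boundary subarcs have length at most $\pi/a$.

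The crux of part~(a) is to rule out more than one switch per stage, i.e.\ to exclude type $CCC$ (and longer). The concentric-ellipse portrait by itself permits repeated traversals, so I would invoke the perturbation argument of Lemma~7 in \cite{Kaya2017}: an \emph{interior} subarc in an abnormal stage subtends an angle of exactly $\pi$, and such a half-turn middle arc admits a length-decreasing feasible perturbation, in contrast to the normal case~(i) of Remark~\ref{remark_int}, where an optimal middle arc must exceed $\pi/a$. This contradicts optimality and leaves at most one switch, so the stage is of type $C$ (length at most $\pi/a$) or $CC$ (length at most $2\pi/a$). I expect this perturbation step---verifying that a half-turn middle arc is genuinely shortenable while respecting the stage endpoint and continuity conditions---to be the main obstacle, exactly as in the single-stage treatment.

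Given part~(a), part~(b) is an assembly argument. Because $\lambda_0$ is a single multiplier shared by all stages (Remark~\ref{rem:abnormal}), abnormality of the overall path means every stage is abnormal; moreover, by Lemma~\ref{singular_int}(a) no stage can contain a straight-line segment, so the whole curve consists solely of $C$ arcs and is of type $CC\cdots C$. Applying part~(a) stage by stage, each of the $N$ stages is $C$ or $CC$; since $\tau_i>0$ for every $i$ (as $p_{i-1}\neq p_i$), each stage contributes at least one and at most two copies of $C$, giving between $N$ and $2N$ copies in total, and, each stage having length at most $2\pi/a$, the total length is bounded by $2N\pi/a$. The only point needing care is the bookkeeping at the nodes: since $\lambda_3$ is continuous there by \eqref{lamb3_cont2} while $\rho_{i+1}\neq\rho_i$ in general, the turning direction changes across node $t_i$ exactly when $\lambda_3(t_i)=0$; I would remark that this governs whether consecutive arcs merge geometrically but does not alter the per-stage count of one or two $C$-arcs, so the bounds $N$ and $2N$ stand.
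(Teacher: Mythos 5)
Your proposal is correct and follows essentially the same route as the paper: part (a) reduces to the single-stage abnormal analysis (bang--bang via Lemma~\ref{singular_int}(a), the concentric-ellipse phase portrait of Figure~\ref{phase_int}(b), and the perturbation argument of Lemma~7 in \cite{Kaya2017} to exclude a third arc), and part (b) is the same stage-by-stage assembly using the fact that the single multiplier $\lambda_0=0$ makes every stage abnormal. Your additional details---the exact $\pi/a$ length of interior abnormal arcs and the node bookkeeping---are consistent with, and slightly more explicit than, the paper's summary-style proof.
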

\begin{proof}
Recall that for an abnormal path in a stage, $\lambda_0 = 0$. \\
(a) By Lemma~\ref{singular_int}(a), the optimal control is of bang--bang type, and the phase diagram of $\lambda_3$ in that stage is given as in Figure~\ref{phase_int}(b). The proof is then furnished similarly as in the proof of Lemma~7 in~\cite{Kaya2017}, which uses the diagram in Figure~3 in~\cite{Kaya2017}, but with $p_{i-1}$ and $p_i$ replaced by $z_0$ and $z_f$, respectively. \\
(b) With $\lambda_0 = 0$, the path in all stages is abnormal and thus, by Lemma~\ref{abnormal_int}(a), the path in each stage will be either of type $C$ or $CC$, facilitating the first part of the conclusion.  Since there are $N$ stages, one would have at least $N$ and at most $2N$ copies of $C$ concatenated, which results in the total length of the path to be at most $2N\pi/a$, completing the proof.
\end{proof}

The following lemma is a companion of Lemma~8 in~\cite{Kaya2017}.
\begin{lemma}[Non-optimality of a \boldmath$CCCC$-type curve in a stage]  \label{CCCC_int} 
Consider Problem~{\em (Pc)}.\linebreak  Any path of type CCCC over the interval $[t_{i-1},t_i]$ in stage $i$, $i = 1,\ldots,N$, is not optimal.
\end{lemma}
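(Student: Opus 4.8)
The plan is to reduce the statement to a single--stage Markov--Dubins problem and then to refute optimality of a four--arc configuration by a perturbation, exactly along the lines of Lemma~8 in~\cite{Kaya2017}, of which the present lemma is the companion. Fix a stage $i$ and suppose, for contradiction, that an optimal interpolating curve for Problem~(Pc) restricts to a path of type $CCCC$ on $[t_{i-1},t_i]$. On this stage the restricted curve is a curvature--constrained curve that joins the oriented point $(p_{i-1},\theta(t_{i-1}))$ to the oriented point $(p_i,\theta(t_i))$. The key observation is that if I can produce a strictly shorter curvature--constrained curve with the \emph{same} two endpoints and the \emph{same} two end tangents, then substituting it for the $i$th piece leaves the node positions and the continuity of $\theta$ at the neighbouring nodes (condition \eqref{lamb3_cont2} and the junction constraints) intact, and hence yields a feasible curve for Problem~(Pc) of strictly smaller total length---a contradiction. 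It therefore suffices to prove that $CCCC$ is non--optimal for the single--stage problem, whose necessary conditions are precisely \eqref{x_eqn}--\eqref{Hzero2}.

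First I would pin down the phase structure. A $CCCC$ path is bang--bang with three switchings and, since consecutive circular arcs must turn in opposite senses, it alternates $L$ and $R$. By Lemma~\ref{abnormal_int}(a) an abnormal path in a stage is at most of type $CC$, so three switchings force $\lambda_0>0$; as the path carries no straight segment, Lemmas~\ref{rho_nonsing_int} and~\ref{straight_int} place us in case~(i) of Remark~\ref{remark_int}, i.e. $\rho_i>\lambda_0>0$, with the phase trajectory of Figure~\ref{phase_int}(a) switching on the $\dot\lambda_3$--axis between the two ellipses. Using \eqref{adjoint3_DE2} and \eqref{lambda3_int} I would compute the turning of an \emph{interior} arc, i.e. an arc running between two consecutive crossings of the $\dot\lambda_3$--axis: since a switching occurs where $\cos(\theta-\phi_i)=-\lambda_0/\rho_i$, each interior arc changes $\theta$ by $2\beta$ with $\beta=\arccos(-\lambda_0/\rho_i)\in(\pi/2,\pi)$, so its length exceeds $\pi/a$. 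Thus both interior arcs of the $CCCC$ path---the second and the third---turn by strictly more than $\pi$; this is the three--switching analogue of the two--switching observation already recorded in Remark~\ref{remark_int}.

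With this structure in hand, the core step is the perturbation. I would parameterize the $CCCC$ path by its four arc lengths and study the terminal map that sends these lengths, via integration of \eqref{x_eqn}--\eqref{theta-eqn} with the alternating bang control, to the final triple $(x(t_i),y(t_i),\theta(t_i))$. Imposing that this triple be held fixed gives three equations in four lengths and, by the implicit function theorem, a one--parameter family of competing $CCCC$ paths sharing the same oriented endpoints. Because the candidate is an extremal, the first variation of the total length $\ell_1+\ell_2+\ell_3+\ell_4$ along this family vanishes, so the contradiction cannot be read off at first order; instead I would extract it from a finite (second--order) perturbation in which the two over--long interior arcs are shortened while the outer arcs and the single free parameter are adjusted to restore the terminal triple. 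The fact that both interior arcs turn by more than $\pi$---so that they ``backtrack'' and are length--inefficient---is exactly what makes a strictly length--decreasing admissible competitor available, degenerating the configuration toward a $CCC$, $CSC$, or shorter path.

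I expect the main obstacle to be precisely this last computation: because the $CCCC$ path already satisfies the maximum principle, one must go beyond first order and verify, simultaneously, that the terminal map has full rank $3$ along the family (so that a genuine constraint--preserving perturbation exists) and that the resulting second--order variation of the length is strictly negative in an admissible direction. This is where the perturbation analysis of Lemma~8 in~\cite{Kaya2017} is reused essentially verbatim; the only genuinely new ingredient is the stage--reduction of the first paragraph, which certifies that a single--stage shortening is a legitimate perturbation for the full interpolation problem~(Pc).
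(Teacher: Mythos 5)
Your proposal follows essentially the same route as the paper's proof: dispose of the abnormal case via Lemma~\ref{abnormal_int}(a), use the phase portrait of Figure~\ref{phase_int}(a) to conclude $\rho_i>\lambda_0>0$ so that the two interior arcs each sweep an angle exceeding $\pi$, and then defer the actual shortening construction to the perturbation argument of Lemma~8 in~\cite{Kaya2017}, which is exactly what the paper does. Your explicit stage-reduction argument (replacing the stage-$i$ piece by a strictly shorter curve with the same oriented endpoints preserves feasibility of the whole interpolant) is left implicit in the paper but is the correct justification for reducing to the single-stage setting; note only that this is a matter of the problem constraints of (Pmc), not of the adjoint continuity condition~\eqref{lamb3_cont2} you cite.
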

\begin{proof}
If the optimal path in a stage is abnormal, then, by Lemma~\ref{abnormal_int}(a), the statement holds immediately.  Suppose that Problem~(Pc) is normal.  Then a general configuration for a {\em candidate} optimal path which is of type $CCCC$ in stage $i$ will be as shown in Figure~4 in~\cite{Kaya2017}, since the phase diagram of $\lambda_3$ in that stage will be as given in Figure~\ref{phase_int}(a), with the lengths of each of the second and third circular arcs being $\pi+\gamma$, where $\gamma > 0$.  The rest of the proof is the same as that of Lemma~8 in~\cite{Kaya2017}.
\end{proof}

Next, we provide a companion of Theorem~1 (Dubins' theorem) in~\cite{Kaya2017}, for the Markov--Dubins interpolation problem.  It reduces to Dubins' theorem~\cite{Dubins1957,Kaya2017} for the interpolating curve segments between any two given consecutive points.
\begin{theorem}[Markov--Dubins Interpolating Curves]  \label{Dubins_int}
Any solution of Problem~{\em (P)}, that is, any $C^1$ and piecewise-$C^2$ shortest path of constrained curvature in the plane between two prescribed endpoints such that the slopes at the endpoints are prescribed and the path visits a sequence of intermediate points, is of type $CSC$, or of type $CCC$, or a subset thereof, between any two consecutive points.  Moreover, if the shortest path is of type $CCC$ between two consecutive points, then the second circular arc is of length greater than $\pi/a$.
\end{theorem}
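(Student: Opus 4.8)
The plan is to prove the statement \emph{stage by stage}. As noted at the start of this section, the necessary conditions \eqref{x_eqn}--\eqref{Hzero2} restricted to the interior of stage~$i$ coincide with those of the classical Markov--Dubins problem, the only differences being the continuity condition \eqref{lamb3_cont2} at the nodes and the possible freeing of $\theta(t_{i-1})$ or $\theta(t_i)$. It therefore suffices to show that the optimal path over each interval $[t_{i-1},t_i]$ is of type $CSC$, $CCC$, or a subset thereof, and that a $CCC$ stage has its middle arc longer than $\pi/a$; collecting these over $i=1,\ldots,N$ yields the theorem. First I would organize the argument by the sign of $\lambda_0$ and by the presence of a singular (straight-line) subarc, following the classification of the phase portrait in Remark~\ref{remark_int}.

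Two of the cases are immediate from the preceding lemmas. If the solution is abnormal, i.e.\ $\lambda_0=0$, then Lemma~\ref{abnormal_int}(a) shows that the stage is of type $CC$ or $C$, each a subset of $CCC$. If the solution is normal, $\lambda_0>0$, and the stage contains a straight-line segment, then Lemma~\ref{straight_int} forces it to be of type $CSC$, $CS$, $SC$, or $S$, all subsets of $CSC$; in particular the $CSCSC\cdots$ pattern of case~(ii) in Remark~\ref{remark_int} is excluded. It then remains to treat the normal, purely bang--bang case. By Remark~\ref{remark_int}, the absence of a singular subarc places the stage in case~(i) ($\rho_i>\lambda_0$) or case~(iii) ($\rho_i<\lambda_0$), since case~(ii) ($\rho_i=\lambda_0$) is precisely the one producing straight lines. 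In case~(iii), and when $\rho_i=0$ via Lemma~\ref{rho_nonsing_int}(a), the defining ellipses never cross the $\dot\lambda_3$-axis, so $u$ is constant and the stage is a single arc $C$.

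The remaining and decisive sub-case is $\rho_i>\lambda_0>0$, i.e.\ case~(i) of Remark~\ref{remark_int}, where the control is genuinely bang--bang with at least one switch. Here I would show that the number of switches is at most two, so that the stage is $CCC$, $CC$, or $C$. The hard part will be to upgrade Lemma~\ref{CCCC_int}, which rules out a $CCCC$ path occupying an \emph{entire} stage, into a bound on the number of arcs in a path that may in principle have more than four: the way to do this is to note that any bang--bang stage with four or more arcs contains four consecutive $C$ subarcs, and then to apply the local perturbation of Lemma~\ref{CCCC_int} (i.e.\ of Lemma~8 in~\cite{Kaya2017}) to this four-arc window while holding the position and velocity at its two ends fixed; since that perturbation strictly shortens the window without disturbing the rest of the curve or the node data, it contradicts optimality. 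Finally, the supplementary claim that a $CCC$ stage has middle arc longer than $\pi/a$ is read off directly from the phase portrait of case~(i) in Remark~\ref{remark_int}, which records that a bang--bang control with two switchings forces the second arc to have length strictly greater than $\pi/a$. Assembling the cases over all stages completes the proof.
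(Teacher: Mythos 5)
Your proof is correct and follows essentially the same route as the paper's: a stage-by-stage case analysis splitting on abnormality (Lemma~\ref{abnormal_int}(a)), the presence of a straight-line segment (Lemma~\ref{straight_int}), and the normal bang--bang case (Lemmas~\ref{rho_nonsing_int} and~\ref{CCCC_int}), with the final claim about the middle arc read off the phase portrait of Remark~\ref{remark_int}(i), exactly as the paper does. The only difference is that you make explicit the window/splicing argument that upgrades the non-optimality of a $CCCC$ path occupying a whole stage to the bound of at most three arcs per stage---a step the paper leaves implicit in its bare citation of Lemma~\ref{CCCC_int}---which is a welcome clarification rather than a different approach.
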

\begin{proof}
Consider Stage~$i$ of the solution curve, $i = 1,\ldots,N$. If the solution is abnormal, i.e., $\lambda_0 = 0$, then by Lemma~\ref{abnormal_int}(a) the shortest path in stage $i$ is of type $C$ or $CC$, which in either case is a subarc of $CCC$.  Suppose that the solution is normal, i.e., $\lambda_0 \neq 0$.  Then the shortest path in stage $i$ is either of type
\begin{enumerate}
\item[(i)] $CSC$, $CS$, $SC$ or $S$, if it contains a straight line segment, by Lemma~\ref{straight_int}, or
\item[(ii)] $CCC$, $CC$ or $C$, by Lemmas~\ref{rho_nonsing_int} and~\ref{CCCC_int}.
\end{enumerate}
The last statement of the theorem is proved by using the phase plane diagram in Figure~\ref{phase_int}(a), with $\rho>\lambda_0>0$, for stage $i$.  If the shortest path is of type $CCC$, then three pieces of ellipses in Figure~\ref{phase_int}(a) are concatenated, with the second ellipse sweeping an angle greater than $\pi$, completing the proof.
\end{proof}

\section{Number of Subarcs and Stationarity of Feasible Solutions}
\label{sec:stationarity}

When it comes to computations, it is desirable to be able to say something more about the overall structure of a solution curve of Problem~(P), in terms of the subarcs $C$ and $S$.  By Theorem~\ref{Dubins_int} above, since in each stage one can have at most  three of the $C$ and $S$ subarcs in total, an upper bound on the combined number of $C$ and $S$ subarcs along an interpolating curve is simply $3N$.  Continuity of the adjoint variable, or the switching function, $\lambda_3$, for Problem~(Pc), which amounts to continuity of the signed curvature, will allow us to sharpen this bound slightly further.  

Suppose that a sequence of points/nodes are given at random for Markov--Dubins interpolation.  In view of computations, it is a very rare occasion that the signed curvature of the interpolating curve switches from $a$ to $-a$, or vice versa, at any of the given nodes.  Under the assumption that this rare event does not occur, Proposition~\ref{subarcs_int} below states that the total number of $C$ and $S$ subarcs is at most $2N+1$, which is considerably smaller than $3N$ when $N$ is large.
\begin{proposition}[Number of Subarcs]  \label{subarcs_int}
If the signed curvature of an optimal path for Problem~{\em (Pc)} does not switch between $a$ to $-a$ at the nodes, i.e., if $u(t_i^+) = u(t_i^-)$, for all $i = 1,\ldots,N-1$, then the total number of $C$ and $S$ subarcs in the optimal path is at most $2N+1$.
\end{proposition}
\begin{proof}
Consider stage $i$, $i = 1,\ldots,N-1$.  Note that, by continuity of $\lambda_3$, $\lambda_3(t_i^+) = \lambda_3(t_i^-) = \lambda_3(t_i)$.  Suppose that $u(t_i^+) = u(t_i^-)$.  Then, by $u(t) = a\,\sgn(\lambda_3(t))$,  $\sgn(\lambda_3(t_i-\varepsilon)) = \sgn(\lambda_3(t_i+\varepsilon))$ for all small enough $\varepsilon > 0$.  There are two cases to consider.
\begin{enumerate}
\item[(i)] $\lambda_3(t_i) \neq 0$:  In this case, $u(t_i-\varepsilon) = u(t_i+\varepsilon) =  a$ or $-a$ for all small $\varepsilon > 0$, which means that we have the same $C$ subarc immediately before and immediately after the node and so the total number of arcs in stages $i$ and $i+1$ are reduced from at most six to at most five.  In other words, the bound on the total number of subarcs has been reduced by one thanks to node~$i$.
\item[(ii)] $\lambda_3(t_i) = 0$:  In this case, for all small $\varepsilon > 0$, the pair $(u(t_i-\varepsilon), u(t_i+\varepsilon))$ has one of the values $(0,\pm a)$, $(\pm a, 0)$ and $(0,0)$, which correspond to the subarc pairs $SC$, $CS$ and $SS$, respectively.  In the case of $SC$, the path in stage~$i$ will be of type $CS$ or $S$ or a subset thereof by Theorem~\ref{Dubins_int}, and the path in stage~$(i+1)$ will be of type $CSC$ or $CCC$ or a subset thereof.  As in part~(i) above, the maximum number of subarcs in the two consecutive stages $i$ and $(i+1)$ is reduced from six to five.  For the case of $CS$, symmetric arguments can be used to get the same conclusion.  In the case of $SS$, again by Theorem~\ref{Dubins_int}, the path in stage~$i$ will be of type $CS$ or $S$, and the path in stage~$(i+1)$ will be of type $SC$ or $S$; so, the maximum total number of subarcs in the two consecutive stages $i$ and $i+1$ is reduced from four to three.
\end{enumerate}
In both of the cases~(i) and (ii) above, the maximum number of subarcs in the two consecutive stages $i$ and $(i+1)$, $i = 1,\ldots, N-1$, is reduced from six to five or three, depending on the combinations of the subarcs.  In other words, the bound on the total number of subarcs has been reduced by one about each node~$i$.  Summation over each node gives $N-1$, and so one gets $3N - (N-1) = 2N + 1$, as asserted.
\end{proof}

\begin{remark}  \rm
Suppose that one of the subarcs in stage $i$ is singular, that is of type $S$.  Then, recall by Lemma~\ref{singular_int}(a) that, $\rho_i = \lambda_0$.  So one gets $\cos(\theta(t) - \phi_i) = -1$.  Let $\thetab_i := \theta(t)$, a real constant, along the straight line subarc.  Then
\[
\phi_i = \thetab_i - \pi\,,
\]
and, after algebraic manipulations,
\[
\lambda_1^i = \frac{\lambda_0}{\sqrt{1 + \tan^2(\thetab_i - \pi)}}\qquad\mbox{and}\qquad
\lambda_2^i = \frac{\lambda_0}{\sqrt{1 + \cot^2(\thetab_i - \pi)}}\,.
\]
\vspace*{-5mm}\endproof
\end{remark}
Without loss of generality, and for simplicity, one can take $\lambda_0 = 1$.

In Theorem~\ref{stationarity_int} below, we state that feasible solutions of Problem~(Pmc), which in each stage are of  the types $CSC$ or $CCC$, or a subset thereof, verify the necessary conditions of optimality, i.e., they are also {\em stationary}, or {\em critical}, {\em solutions} of Problem~(Pc), if there exists a solution to a certain system of equalities and inequalities for $\rho_i$ and $\phi_i$, for all $i = 1,\ldots,N$.

In the proof of Theorem~\ref{stationarity_int} below, we assume that a feasible interpolating curve, which is of these certain types in each stage, has been provided.  In other words, the times $t_1^i$ and $t_2^i$ at which switchings from one subarc to another occur in stage $i$ (note that in general $t_{i-1}\le t_1^i\le t_2^i\le t_i$), as well as the terminal time $t_i$ of stage $i$, which is the length of the curve in stage $i$, $i = 1,\ldots,N$, are at hand.  Therefore the signed curvatures ($a$ or $-a$) of the $C$ subarcs of the feasible curve are given, so, 
\[
\theta_1^i := \theta(t_1^i)\quad\mbox{and}\quad \theta_2^i := \theta(t_2^i)
\]
are also known/easily calculable.  Also note that 
\[
\theta_0^1 := \theta_0\quad\mbox{and that}\quad \theta_0^i := \theta(t_{i-1})\,,\quad i = 2,\ldots,N\,,
\]
with $t_0 := 0$.
\begin{theorem}[Normal Stationarity of Feasible Interpolating Curves]  \label{stationarity_int}
Any feasible solution of Problem~{\em (Pmc)}, i.e., any path satisfying the constraints of Problem~{\em (Pmc)}, which in stage $i$ is of type $CSC$ or $CCC$, or a subset thereof, verifies the maximum principle, if the system of equalities and inequalities in \eqref{nodes}--\eqref{segments} below has a solution for $\rho_i$ and~$\phi_i$, for all $i = 1,\ldots,N$, with $\lambda_0 = 1$: \\[1mm]
For each intermediate node $i$, $i = 1,\ldots,N - 1$,
\begin{equation}  \label{nodes}
\left.\begin{array}{ll}
\rho_{i+1}\,\cos(\theta(t_i) - \phi_{i+1}) = \rho_i\,\cos(\theta(t_i) - \phi_i)\,, & \mbox{ if } u(t_i^+) = u(t_i^-)\,, \\[2mm]
\rho_{i+1}\,\cos(\theta(t_i) - \phi_{i+1}) = -\rho_i\,\cos(\theta(t_i) - \phi_i)\,, & \mbox{ if } u(t_i^+) = -u(t_i^-)\,,
\end{array}\right\}
\end{equation}
and, along each curve segment in stage $i$, $i = 1,\ldots,N$,
\begin{equation}  \label{segments}
\left.\begin{array}{ll}
\rho_i = 1\,,\ \ \phi_i = \theta_1^i \pm \pi\,, & \mbox{ for types } CSC, CS, SC \mbox{ and } S\,, \\[2mm]
\rho_i = -\sec\left((\theta_1^i - \theta_2^i)/2\right)\,,\ \ \phi_i = (\theta_1^i + \theta_2^i)/2\,, & \mbox{ for type }  CCC\,,  \\[2mm]
\rho_i > 1\,,\ \ \phi_i = \theta_1^i - \cos^{-1}(-1/\rho_i)\,, & \mbox{ for type }  CC\,,  \\[2mm]
\rho_i > 0\,,\ \ -\pi < \phi_i < \pi\,, & \mbox{ for type }  C\,.\\[2mm]
\end{array}\right\}
\end{equation}
\end{theorem}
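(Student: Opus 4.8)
The plan is to treat this as a verification (sufficiency) result: given a feasible interpolating curve of the stated types, I would \emph{construct} multipliers $\lambda_0,\lambda_1^i,\lambda_2^i$ and a switching function $\lambda_3$ realizing the maximum principle \eqref{x_eqn}--\eqref{Hzero2}. Setting $\lambda_0=1$ makes the nontriviality requirement $\lambda^i\neq\mathbf 0$ automatic, and since $(\lambda_1^i,\lambda_2^i)$ is encoded by the pair $(\rho_i,\phi_i)$, the only free data in each stage are $\rho_i$, $\phi_i$, and the function $\lambda_3$ on that stage. The state equations \eqref{x_eqn}--\eqref{theta-eqn} hold by feasibility, so everything reduces to producing $(\rho_i,\phi_i)$ and $\lambda_3$ that satisfy the adjoint equation \eqref{adjoint3_DE2}, the control law \eqref{uint}, the vanishing-Hamiltonian identity \eqref{Hzero2}, and the node continuity \eqref{lamb3_cont2}. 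Because the switching times $t_1^i,t_2^i$ and hence the angles $\theta_1^i=\theta(t_1^i)$, $\theta_2^i=\theta(t_2^i)$ are read off from the given curve, the construction can be carried out explicitly, stage by stage.

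The key step is the per-segment derivation of \eqref{segments}. On a straight (singular) sub-arc I would invoke Lemma~\ref{singular_int}(a) to force $\rho_i=\lambda_0=1$, and the singular relation $\cos(\theta-\phi_i)=-1$ to pin $\phi_i=\theta_1^i\pm\pi$, covering the $CSC$, $CS$, $SC$ and $S$ cases. On a bang sub-arc I would \emph{define} $\lambda_3$ through the algebraic identity \eqref{lambda3_int} (equivalently \eqref{Hzero2}); differentiating this expression and using $\dot\theta=u$ recovers the adjoint ODE \eqref{adjoint3_DE2} automatically, so \eqref{adjoint3_DE2} need not be imposed separately. Requiring $\lambda_3$ to vanish exactly at each interior switching angle then forces $\cos(\theta_k^i-\phi_i)=-1/\rho_i$, which solves to the $CCC$, $CC$ and $C$ formulas in \eqref{segments}; the bare inequalities $\rho_i>1$ (resp.\ $\rho_i>0$) come from the phase-portrait classification in Remark~\ref{remark_int}, with the $CCC$ value $\rho_i=-\sec((\theta_1^i-\theta_2^i)/2)$ forcing $|\theta_1^i-\theta_2^i|>\pi$, i.e.\ the middle arc subtending more than $\pi/a$. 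I would then check the decisive \emph{sign} condition that $\rho_i\cos(\theta-\phi_i)+\lambda_0\ge0$ throughout each bang arc, so that $\sgn\lambda_3$ agrees with the control \eqref{uint} along the whole arc rather than merely at its endpoints.

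Finally I would enforce continuity \eqref{lamb3_cont2} of $\lambda_3$ at each interior node by evaluating \eqref{Hzero2} on the two adjoining stages at $t=t_i$ and equating the resulting expressions for $\lambda_3(t_i)$; splitting into the cases $u(t_i^+)=u(t_i^-)$ and $u(t_i^+)=-u(t_i^-)$ produces the two alternatives in \eqref{nodes}. Collecting the stages, if the coupled system \eqref{nodes}--\eqref{segments} admits a solution $(\rho_i,\phi_i)_{i=1}^N$, then the assembled $(\lambda_0,\lambda_1^i,\lambda_2^i,\lambda_3)$ satisfies all of \eqref{x_eqn}--\eqref{Hzero2}, so the feasible curve is stationary. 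I expect the main obstacle to be the \emph{global} coupling: the per-stage sign conditions and the node-continuity conditions are linked across all stages through the single shared value $\lambda_0=1$ and through the continuity of $\lambda_3$, so the delicate point is guaranteeing that the one switching function keeps the correct sign on every bang arc while matching continuously at every node simultaneously. This is exactly why solvability of \eqref{nodes}--\eqref{segments} is taken as a hypothesis rather than established unconditionally.
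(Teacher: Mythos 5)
Your proposal is correct and follows essentially the same route as the paper's own proof: the multipliers are assembled stage by stage, the segment conditions \eqref{segments} are obtained from $\lambda_3$ vanishing at the switching times via \eqref{Hzero2} together with Lemma~\ref{singular_int} and Remark~\ref{remark_int}, and the node conditions \eqref{nodes} are obtained from continuity of $\lambda_3$ combined with \eqref{Hzero2}, split into the two control cases. Your additional check that $\rho_i\cos(\theta-\phi_i)+\lambda_0\ge 0$ holds along each bang arc (so that $\sgn\lambda_3$ agrees with the control everywhere, not merely at the switches) is a point the paper leaves implicit in its phase-portrait reasoning, but this is a refinement of, not a departure from, the same argument.
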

\begin{proof}
Continuity of the adjoint variable~$\lambda_3$ at node $i$ implies from \eqref{lambda3_int}, or~\eqref{Hzero2}, that
\[
\rho_{i+1}\,\cos(\theta(t_i) - \phi_{i+1}) = \rho_i\,\cos(\theta(t_i) - \phi_i)\,,
\]
if $u(t_i^+) = u(t_i^-)$ (as in Proposition~\ref{subarcs_int}), or that
\[
\rho_{i+1}\,\cos(\theta(t_i) - \phi_{i+1}) = -\rho_i\,\cos(\theta(t_i) - \phi_i)\,,
\]
if $u(t_i^+) = -u(t_i^-)$ (which is the case excluded in Proposition~\ref{subarcs_int}).  So, \eqref{nodes} is furnished as required.

Consider, in stage $i$, a feasible curve segment of type $CSC$, or of one of the types $CS$, $SC$, and $S$.  Recall that along the subarc $S$, $\lambda_3(t) = 0$, and so $\rho_i = \lambda_0 > 0$ by Lemma~\ref{singular_int}, and one can set, without loss of generality, $\rho_i = \lambda_0 = 1$.  Hence, Equation~\eqref{Hzero2} reduces to $\cos(\theta(t) - \phi_i) = -1$, with $\theta(t) = \theta_1^i$ constant, which implies that $\phi_i = \theta_1^i \pm \pi$.  These provide the first line of expressions in~\eqref{segments}.

In stage $i$, consider feasible curves of types $CCC$, $CC$ and $C$, one by one, and derive the related expressions in \eqref{segments} as follows.
\begin{itemize}
\item[(a)] Type $CCC$:  This type requires two switchings; so, $t_{i-1} < t_1^i < t_2^i < t_i$ and that $\lambda_3(t_1^i) = 0$ and $\lambda_3(t_2^i) = 0$, with which Equation~\eqref{Hzero2} yields two equations in the two unknowns $\rho_i$ and $\phi_i$; namely, $\rho_i\,\cos(\theta_1^i - \phi_i) + 1 = 0$ and $\rho_i\,\cos(\theta_2^i - \phi_i) + 1 = 0$, where $\rho_i > 1$ by Remark~\ref{remark_int}.  These two equations result in $\cos(\theta_1^i - \phi_i) = \cos(\theta_2^i - \phi_i)$.  By Figure~\ref{phase_int}(a) and the second statement of Theorem~\ref{Dubins_int}, $\theta_2^i - \phi_i = -(\theta_1^i - \phi_i)$.  Then simple algebraic manipulations provide a unique solution for the constants $\rho_i$ and $\phi_i$ as:
\[
\phi_i = (\theta_1^i + \theta_2^i)/2\,\quad\mbox{and}\quad
\rho_i = -\sec\left((\theta_1^i - \theta_2^i)/2\right)\,.
\]

\item[(b)]  Type $CC$: This type requires only one switching; so, without loss of generality, let $t_{i-1} < t_1^i < t_2^i = t_i$.  Then $\lambda_3(t_1^i) = 0$ and Equation~\eqref{Hzero2} result in $\rho_i\,\cos(\theta_1^i - \phi_i) + 1 = 0$, i.e., $\phi_i = \theta_1^i - \cos^{-1}(-1/\rho_i)$, where $\rho_i > 1$ by Remark~\ref{remark_int}.

\item[(c)]  Type $C$:  This type requires no switchings, so by Remark~\ref{remark_int}, any $\rho_i > 0$ and any $-\pi\le\phi_i\le\pi$ would do.  \\[-10mm]
\end{itemize}
\end{proof}

The following theorem is a companion of Theorem~\ref{stationarity_int}.  It states that under certain junction/node conditions any feasible interpolating curve of Problem~(Pmc) which is of type $CC$ or $C$ between each two consecutive nodes, and each $C$-subarc in the curve is of length not greater than $\pi/a$, is stationary, with $\lambda=0$, i.e., abnormal.

\begin{theorem}[Abnormal Stationarity of Feasible Interpolating Curves]  \label{stationarity_int_abnormal}
Any feasible path for Problem~{\em (Pmc)}, i.e., any path satisfying the constraints of Problem~{\em (Pmc)}, which in stage $i$ is of type $CC$ or $C$, where the length of any subarc $C$ is not greater than $\pi/a$, verifies the maximum principle, if the system of equalities and inequalities in \eqref{nodes_abnormal}--\eqref{segments_abnormal} below has a solution for $\rho_i$ and~$\phi_i$, for all $i = 1,\ldots,N$, with $\lambda_0 = 0$: \\[1mm]
For each intermediate node $i$, $i = 1,\ldots,N - 1$,
\begin{equation}  \label{nodes_abnormal}
\left.\begin{array}{ll}
\rho_{i+1}\,\cos(\theta(t_i) - \phi_{i+1}) = \rho_i\,\cos(\theta(t_i) - \phi_i)\,, & \mbox{ if } u(t_i^+) = u(t_i^-)\,, \\[2mm]
\rho_{i+1}\,\cos(\theta(t_i) - \phi_{i+1}) = -\rho_i\,\cos(\theta(t_i) - \phi_i)\,, & \mbox{ if } u(t_i^+) = -u(t_i^-)\,,
\end{array}\right\}
\end{equation}
and, along each curve segment in stage $i$, $i = 1,\ldots,N$,
\begin{equation}  \label{segments_abnormal}
\left.\begin{array}{ll}
\rho_i > 0\,,\ \ \phi_i = \theta_1^i \pm \pi/2\,, & \mbox{ for type }  CC\,,  \\[2mm]
\rho_i > 0\,,\ \ \phi_i = \theta_0^i + u(t_{i-1}^+)\,\pi/2\,, & \mbox{ for type }  C\,.\\[2mm]
\end{array}\right\}
\end{equation}
\end{theorem}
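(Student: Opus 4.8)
The plan is to mirror the verification carried out for Theorem~\ref{stationarity_int}, replacing the normal multiplier $\lambda_0=1$ by the abnormal one $\lambda_0=0$. Given a feasible path that is of type $CC$ or $C$ in each stage, with every $C$-subarc of length at most $\pi/a$, together with constants $\rho_i,\phi_i$ solving \eqref{nodes_abnormal}--\eqref{segments_abnormal}, I would set $\overline{\lambda}_1^i:=\rho_i\cos\phi_i$ and $\overline{\lambda}_2^i:=\rho_i\sin\phi_i$, integrate \eqref{adjoint3_DE2} on each stage to produce $\lambda_3$, and impose continuity \eqref{lamb3_cont2} at the interior nodes. It then remains to confirm that this nontrivial multiplier (nontriviality being immediate from $\rho_i>0$) satisfies \eqref{uint} and \eqref{Hzero2}, for then all of the necessary conditions \eqref{x_eqn}--\eqref{Hzero2} hold and the path is stationary.

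First I would recover the node relations \eqref{nodes_abnormal}. A bang--bang switch can occur only where $\lambda_3=0$; hence if $u(t_i^+)=u(t_i^-)$ then, evaluating \eqref{Hzero2} with $\lambda_0=0$ on each side of the node and subtracting---the common term $\lambda_3(t_i)\,u$ cancelling by continuity of $\lambda_3$---yields the first line of \eqref{nodes_abnormal}. If instead $u(t_i^+)=-u(t_i^-)$, continuity forces $\lambda_3(t_i)=0$, so \eqref{Hzero2} makes both $\rho_i\cos(\theta(t_i)-\phi_i)$ and $\rho_{i+1}\cos(\theta(t_i)-\phi_{i+1})$ vanish and the second line holds trivially.

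Next I would dispose of the two admissible stage types. For type $CC$ there is a single interior switch at $t_1^i$ with $\lambda_3(t_1^i)=0$; inserting this into \eqref{Hzero2} with $\lambda_0=0$ and using $\rho_i>0$ gives $\cos(\theta_1^i-\phi_i)=0$, i.e.\ $\phi_i=\theta_1^i\pm\pi/2$, which is the $CC$ entry of \eqref{segments_abnormal}. For type $C$ there is no interior switch, so instead of locating a zero of $\lambda_3$ I must verify that the constructed $\lambda_3$ keeps the single sign demanded by \eqref{uint}. From the abnormal form of \eqref{lambda3_int} one has $\lambda_3(t)=-\sgn(u)\,(\rho_i/a)\cos(\theta(t)-\phi_i)$, and the value of $\phi_i$ prescribed in \eqref{segments_abnormal} makes $\theta(t)-\phi_i$ begin at $\mp\pi/2$ and sweep an interval of length $a\tau_i\le\pi$; thus $\cos(\theta(t)-\phi_i)\ge0$ throughout, $\lambda_3$ retains its sign, and no switch occurs inside the stage.

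The crux is this last, type-$C$, step, where the hypothesis that each $C$-subarc has length at most $\pi/a$ is indispensable: were the arc longer, $\theta(t)-\phi_i$ would pass beyond $\pm\pi/2$, the sign of $\cos(\theta(t)-\phi_i)$ would flip, and either the positivity $a\,|\lambda_3|=\rho_i\cos(\theta-\phi_i)\ge0$ or the sign rule \eqref{uint} would fail. With the control/sign condition verified on every stage and continuity secured at every node, the state and adjoint equations \eqref{x_eqn}--\eqref{adjoint3_DE} and the vanishing Hamiltonian \eqref{Hzero2} hold by construction; hence every feasible path satisfying the hypotheses, for which \eqref{nodes_abnormal}--\eqref{segments_abnormal} admits a solution, verifies the maximum principle as an abnormal stationary solution.
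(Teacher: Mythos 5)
Your proof is correct and follows essentially the same route as the paper's: with $\lambda_0=0$, the vanishing-Hamiltonian condition \eqref{Hzero2} at the switching point yields the $CC$ entry of \eqref{segments_abnormal}, the relation $\theta_0^i-\phi_i=-\sgn(u(t_{i-1}^+))\,\pi/2$ yields the $C$ entry, and continuity of $\lambda_3$ at the interior nodes yields \eqref{nodes_abnormal}. You are in fact more explicit than the paper in two places it leaves implicit, namely the node conditions (which the paper's proof of this theorem omits, deferring to the identical argument in Theorem~\ref{stationarity_int}) and the sign check for a type-$C$ stage, which pinpoints exactly where the hypothesis that each $C$-subarc has length at most $\pi/a$ is needed.
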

\begin{proof}
In stage $i$, consider feasible curves of types $CC$ and $C$, one by one, and derive the pertaining expressions in \eqref{segments_abnormal} as follows.\begin{itemize}
\item[(a)]  Type $CC$:  This type has only one switching, so without loss of generality, let $t_{i-1}\neq t_1^i\neq t_2^i = t_i$.  Then $\lambda_3(t_1^i) = 0$ and Equation~\eqref{Hzero2} give $\rho_i\,\cos(\theta_1^i - \phi_i) = 0$, where $\rho_i > 0$ by Remark~\ref{rem:abnormal}, and so $\cos(\theta_1^i - \phi_i) = 0$, which yields $\phi_i = \theta_1^i \pm \pi/2$.
\item[(b)]  Type $C$:  This type has no switchings; so, $\rho_i > 0$ and $\theta_0^i-\phi_i = -\sgn(u(t_{i-1}^+))\,\pi/2$.  \\[-10mm]
\end{itemize}
\end{proof}

In the following corollary to Theorem~\ref{stationarity_int}, we claim that Equations~\eqref{nodes}--\eqref{segments} are readily satisfied for feasible interpolating curves with stages of type $CSC$.  Note that the case when $u(t_i^+) = -u(t_i^-)$ is not interesting, as otherwise the second $C$-subarc in any stage is a full circle, which obviously is not optimal.  Therefore we only consider the case when $u(t_i^+) = u(t_i^-)$.

\begin{theorem}[Stationarity of Interpolating curves with stages of type CSC]  \label{stationary_CSC}
Any feasible solution of Problem~{\em (Pmc)}, i.e., any path satisfying the constraints of Problem~{\em (Pmc)}, which in every stage is of type $CSC$, verifies the maximum principle, with $u(t_i^+) = u(t_i^-)$, $i = 1,\ldots,N-1$.
\end{theorem}
\begin{proof}
Suppose that an interpolating curve with stages of type $CSC$ is a feasible solution of Problem~(Pmc). By Lemma~\ref{singular_int} and Remark~\ref{remark_int}(ii), $\rho_i = \rho_{i+1} = \lambda_0 > 0$.  Without loss of generality, set $\lambda_0 = 1$.  Figure~\ref{CSC_phase} reproduces the phase diagram of $\lambda_3$ and $\dot{\lambda}_3$, earlier shown in Figure~\ref{phase_int}(a), for the particular case of $CSC$ type arcs, i.e., when $\rho = \lambda_0 = 1$.  The particular instance considered in the diagram is one where the third subarc in the $i$th stage is an $R$-subarc; however, the case when the third subarc is an $L$-subarc can be treated in a similar fashion.
\begin{figure}[t]
\psfrag{L}{$\lambda_3$}
\psfrag{Ld}{\hspace*{-1mm}$\dot{\lambda}_3$}
\psfrag{b}{\footnotesize $-1/a$}
\psfrag{c}{\footnotesize $1/a$}
\psfrag{u1}{\footnotesize $u(t) = a$}
\psfrag{u2}{\footnotesize $u(t) = -a$}
\psfrag{theta}{\footnotesize $\theta(t_i)-\phi_i$}
\psfrag{negtheta}{\footnotesize $-(\theta(t_i)-\phi_i)$}
\psfrag{A}{\small A}
\psfrag{B}{\small B}
\psfrag{p1}{\footnotesize $1$}
\psfrag{m1}{\footnotesize $-1$}
\[
\includegraphics[width=100mm]{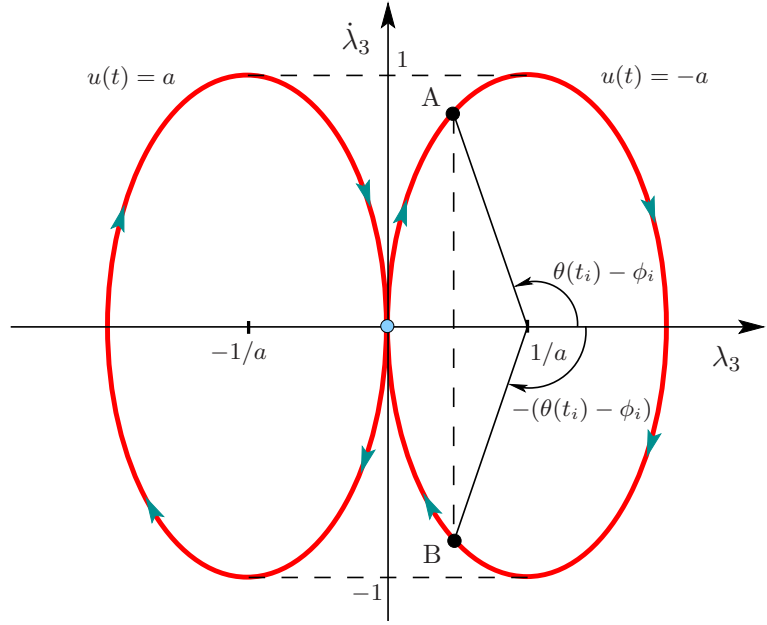}
\]
\caption{\small\sf Phase diagram for the proof of Theorem~\ref{stationary_CSC}.}
\label{CSC_phase}
\end{figure}
In the diagram, the point labelled as A represents a node where one goes from Stage $i$ to Stage $(i+1)$ of the interpolating curve.  By continuity of $\lambda_3$, $u(t_i^+) = u(t_i^-)$.  So, in Equations~\eqref{nodes}, we only consider the case
\begin{equation}  \label{cos_eqn_CSC}
\cos(\theta(t_i) - \phi_{i+1}) = \cos(\theta(t_i) - \phi_i)\,,
\end{equation}
$i=1,\ldots,N-1$.  Clearly, at the origin, where switchings, first from $C$ to $S$ and then $S$ to $C$, occur, $\theta(t) = \theta_1^i$, for all $t\in[t_1^i,t_2^i]$, and, from the diagram,
\begin{equation} \label{phi_i}
\theta_1^i - \phi_i = -\sgn(u(t_i))\,\pi\,,
\end{equation}
in the $i$th stage.  Therefore, the first (or the relevant) condition in \eqref{segments} is satisfied in the $i$th stage.  It suffices to show next that \eqref{segments} is satisfied in the $(i+1)$st stage.

Equation~\eqref{cos_eqn_CSC} implies two cases: \\[2mm]
(i) $\theta(t_i) - \phi_{i+1} = \theta(t_i) - \phi_i$: In this case, $\dot{\lambda}_3$ is continuous, and so, as can be seen from the diagram, the $C$ subarc (an $R$-subarc with $u(t)=-a$) becomes a whole circle before a switching to an $S$ subarc occurs at the origin. \\[2mm]
(ii) $\theta(t_i) - \phi_{i+1} = -(\theta(t_i) - \phi_i)$:  In this case, $\dot{\lambda}_3$ is discontinuous, in that the phase plane trajectory jumps from point A to point B, as can be seen in Figure~\ref{CSC_phase}. \\[2mm]
Since in Stage $(i+1)$ the curve is also of type $CSC$, one has 
\begin{equation} \label{phi_i+1}
\theta_1^{i+1} - \phi_{i+1} = \sgn(u(t_i))\,\pi\,,
\end{equation}
as can be observed in Figure~\ref{CSC_phase}, satisfying~\eqref{segments}.
\end{proof}

\newpage
\begin{proposition}  \label{lengths_CSC}
If the interpolating curve solving Problem~(P) is of type $CSC$ in each of the two consecutive stages $i$ and $(i+1)$, then
\begin{itemize}
\item[{\rm (a)}] $|\theta(t_i) - \theta_1^i| < \pi$, and
\item[{\rm (b)}] $\theta(t_i) = (\theta_1^i + \theta_1^{i+1}) / 2$\,.
\end{itemize}
\end{proposition}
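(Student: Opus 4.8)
The plan is to read both identities off the phase-plane analysis already used for Corollary~\ref{stationary_CSC} and then to pin the angles down by an optimality (no-redundant-loop) argument, which is legitimate because the curve solves Problem~(P) and is hence genuinely shortest. Recall (cf.\ Figure~\ref{CSC_phase}) that for a $CSC$ stage $\theta_1^i$ is the constant velocity angle along the singular segment while $\theta_2^i=\theta(t_i)$ is the velocity angle at node~$i$. Since both stages are of type $CSC$, the discussion preceding Corollary~\ref{stationary_CSC} excludes $u(t_i^+)=-u(t_i^-)$ (it would turn an adjacent $C$-arc into a full circle), so $u(t_i^+)=u(t_i^-)$; continuity of $\lambda_3$ at the node then gives~\eqref{cos_eqn_CSC}, and its subcase~(i) forces an adjacent $C$-arc to close into a whole circle and is ruled out by optimality. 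Thus subcase~(ii) holds, $\theta(t_i)-\phi_{i+1}=-(\theta(t_i)-\phi_i)$, whence $2\,\theta(t_i)=\phi_i+\phi_{i+1}$ modulo $2\pi$.

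For part~(b) I would insert the switching relations along the two straight segments. By Lemma~\ref{singular_int} we have $\rho_i=\rho_{i+1}=\lambda_0=1$, and \eqref{phi_i}--\eqref{phi_i+1} give $\phi_i=\theta_1^i+\sgn(u(t_2^i))\,\pi$ and $\phi_{i+1}=\theta_1^{i+1}-\sgn(u(t_2^i))\,\pi$. Adding these cancels the $\pm\pi$, so $\phi_i+\phi_{i+1}=\theta_1^i+\theta_1^{i+1}$ and therefore $\theta(t_i)=(\theta_1^i+\theta_1^{i+1})/2$, which is part~(b).

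For part~(a) I would return to the curve itself. Since $u(t_i^+)=u(t_i^-)$ and $\theta$ is continuous at the node, the second $C$-arc of stage~$i$ and the first $C$-arc of stage~$(i+1)$ lie on one circle of radius $1/a$ and merge into a single arc through $p_i$ turning monotonically from direction $\theta_1^i$ to direction $\theta_1^{i+1}$; by part~(b) the node sits at its angular midpoint, so this second $C$-arc turns by $\theta_2^i-\theta_1^i=(\theta_1^{i+1}-\theta_1^i)/2$. If the combined arc turned through $2\pi$ or more it would wind at least once fully around the circle, and deleting one revolution would leave a strictly shorter feasible curve still meeting $p_i$ with the prescribed end tangents---contradicting optimality. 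Hence the combined turning is less than $2\pi$ in magnitude, so $|\theta_2^i-\theta_1^i|<\pi$.

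The step I expect to be most delicate is the bookkeeping of angles modulo $2\pi$: the relations $2\theta(t_i)=\phi_i+\phi_{i+1}$ and $\phi_i+\phi_{i+1}=\theta_1^i+\theta_1^{i+1}$ only fix $\theta(t_i)$ up to an additive $\pi$, so the averaging formula in~(b) is not exact until the branch is chosen. This is precisely what the no-loop argument of~(a) settles; thus (a) and (b) are really established in tandem, with the optimality bound in~(a) selecting the branch that makes~(b) hold as stated.
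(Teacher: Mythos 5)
Your proof is correct and takes essentially the same route as the paper's: both arguments rule out case (i) of Corollary~\ref{stationary_CSC} as yielding a non-optimal full circle, extract $2\theta(t_i)=\phi_i+\phi_{i+1}$ from case (ii), substitute the switching relations \eqref{phi_i}--\eqref{phi_i+1} to get part (b), and prove part (a) by noting that the merged middle $C$-arc would otherwise contain a deletable full revolution, contradicting optimality. The only differences are cosmetic---you derive (b) before (a), whereas the paper states (a) first while implicitly using the same case-(ii) symmetry---and your closing remark on the mod-$2\pi$ branch selection is a sound clarification of a point the paper leaves implicit.
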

\begin{proof}
Suppose that the (optimal) interpolant is of type $CSC$ in each of the $i$th and $(i+1)$st stages.  The proofs of parts (a) and (b) are provided separately as follows. \\[2mm]
(a) Suppose that $|\theta(t_i) - \theta_1^i| \ge \pi$.  Then $\theta(t_i) - \phi_i \ge\pi$.  Using the conclusion $u(t_i^+) = u(t_i^-)$, $i = 1,\ldots,N$, of Theorem~\ref{stationary_CSC}, $CSC|CSC$ reduces to $CSCSC$, where the $C$ subarc in the middle which, by Figure~\ref{CSC_phase}, is of length at least $2\pi$, resulting in a full circular subarc, which cannot be optimal, yielding a contradiction. \\[2mm]
(b) Since the case (i) in the proof of Theorem~\ref{stationary_CSC} yields a full circular subarc, and so is non-optimal, we consider only the case (ii) in which $\theta(t_i) - \phi_{i+1} = -(\theta(t_i) - \phi_i)$, i.e.,
\begin{equation}  \label{2theta}
2\theta(t_i) = \phi_i + \phi_{i+1}\,.
\end{equation}
Then substitutions of $\phi_i$ and $\phi_{i+1}$ in \eqref{phi_i} and \eqref{phi_i+1} into \eqref{2theta} and a re-arrangement of the terms yield the required result.
\end{proof}

\begin{remark}  \label{midpoint_CSC} \rm
Proposition~\ref{lengths_CSC}(b) implies that the node (where one goes from stage $i$ to stage $(i+1)$) is the midpoint of the $C$ subarc, i.e., it subdivides the $C$ subarc into two segments of equal length.  This provides both a necessary condition of optimality of a Markov--Dubins interpolating curve and a computational tool which might be employed to facilitate/speed up convergence.  For example, in the case when consecutive nodes are sufficiently far from one another, $CCC$ cannot be an option and so $CSC$ is the type of stage arcs one should contemplate.  In such a case, one can incorporate the fact that the nodes will be the midpoints of the respective intermediate $C$ subarcs.
\endproof
\end{remark}

\section{A Numerical Method and Experiments}
\label{methods}

\subsection{A Numerical method for Markov--Dubins interpolating curves}
\label{themethod}

In this section, the numerical technique presented in~\cite{Kaya2017} for finding Markov--Dubins curves, based on switching time optimization, or arc parameterization,  will be generalized for finding Markov--Dubins interpolating curves.  We adopt a similar terminology as that in~\cite{Kaya2017} in terms of the stages of the interpolant curves.  Define the subarc lengths for stage $i$, $i = 1,\ldots,N$, as
\begin{equation}  \label{subarc_lengths}
\xi_j ^i:= t^i_j - t^i_{j-1}\,,\quad\mbox{for } j = 1,\ldots,5\,,
\end{equation}
where $t^i_j$ are the {\em switching times} for the subarcs in stage $i$.  Let $t^1_0 := 0$ and $t^N_5 := t_N$.  The notation and terminology in this section come from earlier work on arc parameterization, or switching time optimization, studied in~\cite{KayNoa1994,KayNoa1996,KayLucSim2004,KayNoa2003,MauBueKimKay2005,KayMau2014} for problems whose solutions cannot be derived analytically, unlike the problem we have here.  

We represent the possible types of concatenated subarc solutions throughout all of the stages sequentially as
\[
L_{\xi^1_1}R_{\xi^1_2}S_{\xi^1_3}L_{\xi^1_4}R_{\xi^1_5}\ |\ L_{\xi^2_1}R_{\xi^2_2}S_{\xi^2_3}L_{\xi^2_4}R_{\xi^2_5}\ |\ \cdots\ |\ L_{\xi^N_1}R_{\xi^N_2}S_{\xi^N_3}L_{\xi^N_4}R_{\xi^N_5}\,,
\]
where $L$ (left-turn), $R$ (right-turn) and $S$ (straight-line) and the associated notation used here is the same as those defined in~\cite{Kaya2017}.  Although, formally, five subarcs are concatenated in each stage, at most three of the arc durations can be nonzero in an optimal solution.  Recall indeed that, by Theorem~\ref{Dubins_int}, in any given stage the path will be of type $CSC$ or $CCC$, or a subset of these strings.  Here, $C$ can be represented either by $L$ (a left-turn arc) or $R$ (a right-turn arc).  For example, the type $RLR$ in the $i$th stage is given by $\xi^i_1=\xi^i_3=0$ and $\xi^i_2,\xi^i_4,\xi^i_5 > 0$.

The solution of the ODEs in Problem~(Pc) can be given as follows.  For $t^i_{j-1} \le t < t^i_j$, and all $i = 1,\ldots,N$,
\begin{eqnarray}
&& \theta(t) = \theta(t^i_{j-1}) + u(t)\,(t - t^i_{j-1})\,,\quad\mbox{ if } j = 1,\ldots,5\,, \label{theta_arc_int} \\[2mm]
&& x(t) = \left\{\begin{array}{ll}
x(t^i_{j-1}) + (\sin\theta(t) - \sin\theta(t^i_{j-1})) / u(t)\,, & \mbox{ if } j = 1,2,4,5\,, \\[2mm]
x(t^i_{j-1}) + \cos\theta(t)\,(t - t^i_{j-1})\,, & \mbox{ if } j = 3\,,
\end{array}\right. \label{x_arc_int} \\[3mm]
&& y(t) = \left\{\begin{array}{ll}
y(t^i_{j-1}) - (\cos\theta(t) - \cos\theta(t^i_{j-1})) / u(t)\,, & \mbox{ if } j = 1,2,4,5\,, \\[2mm]
y(t^i_{j-1}) + \sin\theta(t)\,(t - t^i_{j-1})\,, & \mbox{ if } j = 3\,, \label{y_arc_int}
\end{array}\right.
\end{eqnarray}
where
\begin{equation}  \label{arc_control}
u(t) = \left\{\begin{array}{rl}
a\,, & \mbox{ if } j = 1,4\,, \\[1mm]
-a\,, & \mbox{ if } j = 2,5\,, \\[1mm]
0\,, & \mbox{ if } j = 3\,.
\end{array}\right.
\end{equation}
We note that the control variable $u(t)$ is a piecewise constant function, which takes $N$ copies of the sequence of values $\{a, -a, 0, a, -a\}$, i.e.,
\[
\{a, -a, 0, a, -a\ |\ a, -a, 0, a, -a\ |\ \ldots\ |\ a, -a, 0, a, -a\}\,.
\]
After evaluating the state variables in \eqref{theta_arc_int}--\eqref{y_arc_int} at the switching times and carrying out algebraic manipulations, one can equivalently re-write Problem~(Pc) as follows.
\[
\mbox{(Ps)}\left\{\begin{array}{rl}
\min &\ \ds t_N = \sum_{i=1}^N\sum_{j=1}^5 \,\xi^i_j
   \\[4mm]
\mbox{s.t.} &\ds\ x_{i-1} - x_i + \frac{1}{a}\left(-\sin\theta^i_0 + 2\,\sin\theta^i_1 - 2\,\sin\theta^i_2 + 2\,\sin\theta^i_4 - \sin\theta^i_5\right) + \xi^i_3\,\cos\theta^i_2 = 0\,, \\[3mm] 
  &\ds\ y_{i-1} - y_i + \frac{1}{a}\left(\cos\theta^i_0 - 2\,\cos\theta^i_1 + 2\,\cos\theta^i_2 - 2\,\cos\theta^i_4 + \cos\theta^i_5 \right) + \xi^i_3\,\sin\theta^i_2 = 0\,, \\[3mm]
  &\ds\ x_N = x_f\,,\ \ y_N = y_f\,,\ \ \theta^1_0 = \theta_0\,,\ \ \sin\theta^N_5 = \sin\theta_f\,,\ \ \cos\theta^N_5 = \cos\theta_f\,, \\[3mm]
  &\ds\ \xi^i_j \ge 0\,,\quad\mbox{ for } i = 1,\ldots,N\,,\ j = 1,\ldots,5\,, \\[3mm]
  &\ds\ \theta^{i+1}_0 =  \theta^i_5\,,\quad\mbox{ for } i = 1,\ldots,N-1\,,
\end{array}\right.
\]
where
\begin{equation}  \label{thetas_int}
\theta^i_1 = \theta^i_0 + a\,\xi^i_1\,,\qquad
\theta^i_2 = \theta^i_1 - a\,\xi^i_2\,,\qquad
\theta^i_4 = \theta^i_2 + a\,\xi^i_4\,.\qquad
\theta^i_5 = \theta^i_4 - a\,\xi^i_5\,.\qquad
\end{equation}
Substitution of $\theta_1^i$, $\theta_2^i$, $\theta_4^i$ and $\theta_5^i$ in \eqref{thetas_int} into Problem~(Ps) yields a finite dimensional nonlinear optimization problem in $5N$ variables, $\xi^i_j$,  $i = 1,\ldots,N$,  $j = 1,\ldots,5$.

\begin{remark}  \label{sincos} \rm
The constraints
\begin{equation} \label{slope_constr}
\sin\theta_5^N = \sin\theta_f\quad\mbox{and}\quad \cos\theta_5^N = \cos\theta_f\,,
\end{equation}
in Problem~(Ps), ensure that we satisfy the slope condition at the terminal point.  For example, the Markov--Dubins interpolating curve given in Figure~\ref{fig:MD_4pts}(a) for the problem in Example~1 below can be obtained by setting $\theta(t_f) = \theta_f-2\,\pi$, but not by setting $\theta(t_f) = \theta_f$.  The slope condition~\eqref{slope_constr} takes care of such difficulties.
\endproof
\end{remark}

\subsection{Numerical Experiments}
\label{experiments}

In this section, we present numerical experiments by solving Problem~(Ps) under various sets of data to construct Markov--Dubins interpolating curves and other stationary/feasible curves.  For solving Problem~(Ps), we use AMPL~\cite{AMPL} as an optimization modelling language, which employs the optimization software Knitro, version 10.3.0~\cite{Knitro}. In Knitro, the feasibility and optimality tolerances {\tt feastol} and {\tt opttol} were set as $10^{-15}$.  We also allowed Knitro to chose an optimization algorithm appropriately; so, {\tt algo=0} was set.

In the numerical experiments, first the structure of the switchings, i.e. the configuration of the subarcs, is obtained with a coarse tolerance, of say $10^{-8}$.  Once the switching structure is found, the subarcs that are not needed (because their lengths are too small) are excluded from the computations to improve the accuracy of the remaining subarc lengths.

It should be noted that it is also possible to pair up AMPL with other optimization software, e.g., Ipopt~\cite{WacBie2006}, SNOPT\cite{GilMurSau2005} and TANGO~\cite{AndBirMarSch2007,BirMar2014}, in solving Problem~(Ps).

We present numerical experiments in Examples~1--4 below, in which we respectively find 4-, 6-, 20- and 12-point Markov--Dubins interpolating curves.  Define the {\em matrix of subarc lengths} as
\[
\Xi := \left[
\begin{array}{ccccc}
\xi_1^1\ &\ \xi_2^1\ &\ \xi_3^1\ &\ \xi_4^1\ &\ \xi_5^1 \\[2mm]
\xi_1^2\ &\ \xi_2^2\ &\ \xi_3^2\ &\ \xi_4^2\ &\ \xi_5^2 \\
\vdots & \vdots & \vdots & \vdots & \vdots \\[1mm]
\xi_1^N\ &\ \xi_2^N\ &\ \xi_3^N\ &\ \xi_4^N\ &\ \xi_5^N
\end{array} \right],
\]
where the subarc lengths $\xi_j^i$, $i = 1,\ldots,N$, $j = 1,\ldots,5$, are defined as in \eqref{subarc_lengths}.  We list the lengths of the subarcs in each example by means of the matrix $\Xi$.  We also provide the overall length, $t_f$, which  is the sum of all subarc lengths.  The reported numerical results are correct up to 12 decimal places.  We provide the numerical results in much detail so that the instances in Examples~1--4 can be scrutinised and that they may serve as test bed examples in future studies.\\

\afterpage{\clearpage}
\noindent
{\bf Example 1}

\noindent
Consider Problem~(P), or equivalently Problem~(Pc), with the initial and terminal oriented points given as $(x_0,y_0,\theta_0) = (0,0,-\pi/3)$ and $(x_f,y_f,\theta_f) = (1,1,-\pi/6)$.  The intermediate points are taken to be $(x_1,y_1) = (-0.1,0.3)$ and $(x_2,y_2) = (0.2,0.8)$, and the bound on the curvature, $a=3$.  The lower bound on the turning radius is then $1/3$.

In what follows, we list the configurations and the subarc lengths of six feasible solutions of Problem~(P), all of which satisfy the arc-type conditions given in Theorem~\ref{Dubins_int}.  These solutions are graphically depicted in Figure~\ref{fig:MD_4pts}(a)--(f).  The solution curve in Figure~\ref{fig:MD_4pts}(a) has the shortest length and therefore is a Markov--Dubins interpolating curve, at least as far as the intensive numerical experiments conducted indicate.

\ \\
\noindent
(a) $RSL | LSR | RSR \equiv RSLSRSR$\,:\ \ $t_f = 3.415578858075$\,,
{\small
\[
\Xi = \left[
\begin{array}{lllll}
\mbox{\ \ \ \ \ \ \ \ \ 0} & 1.609029653347 & 0.245373087450 & 0.115596919495 & \mbox{\ \ \ \ \ \ \ \ \ 0} \\
0.115596919495 & \mbox{\ \ \ \ \ \ \ \ \ 0} & 0.348770381640 & \mbox{\ \ \ \ \ \ \ \ \ 0} & 0.122237275595 \\
\mbox{\ \ \ \ \ \ \ \ \ \ 0} & 0.122237275595 & 0.439185533812 & \mbox{\ \ \ \ \ \ \ \ \ 0} & 0.297551811646
\end{array} \right].
\]}

\noindent
(b) $RLR | RL | LSR \equiv RLRLSR$\,:\ \ $t_f = 3.859270768865$\,,
{\small
\[
\Xi = \left[
\begin{array}{lllll}
\mbox{\ \ \ \ \ \ \ \ \ 0} & 0.180338361465 & \mbox{\ \ \ \ \ \ \ \ \ 0} & 1.671087869740 & 0.449161039386 \\
\mbox{\ \ \ \ \ \ \ \ \ 0} & 0.660606349458 & \mbox{\ \ \ \ \ \ \ \ \ 0} & 0.040959265073 & \mbox{\ \ \ \ \ \ \ \ \ 0} \\
0.067722881739 & \mbox{\ \ \ \ \ \ \ \ \ 0} & 0.474263660961 & \mbox{\ \ \ \ \ \ \ \ \ 0} & 0.315131341041
\end{array} \right].
\]}

\noindent
(c) $RLR | RL | LR \equiv RLRLR$\,:\ \ $t_f = 4.258605346880$\,,
{\small
\[
\Xi = \left[
\begin{array}{lllll}
\mbox{\ \ \ \ \ \ \ \ \ 0} & 0.014658731348 & \mbox{\ \ \ \ \ \ \ \ \ 0\ \ \ \ \ \ \ \ \ } & 1.660436142087 & 0.198370374835 \\
\mbox{\ \ \ \ \ \ \ \ \ 0} & 1.348732850403 & \mbox{\ \ \ \ \ \ \ \ \ 0} & 0.144567480729 & \mbox{\ \ \ \ \ \ \ \ \ 0} \\
0.411565513223 & 0.480274254254 & \mbox{\ \ \ \ \ \ \ \ \ 0} & \mbox{\ \ \ \ \ \ \ \ \ 0} & \mbox{\ \ \ \ \ \ \ \ \ 0}
\end{array} \right].
\]}

\noindent
(d) $LR | RSL | LSR \equiv LRSLSR$\,:\ \ $t_f = 4.298084620005$\,,
{\small
\[
\Xi = \left[
\begin{array}{lllll}
1.672596123844 & 0.171799627570 & \mbox{\ \ \ \ \ \ \ \ \ 0} & \mbox{\ \ \ \ \ \ \ \ \ 0} & \mbox{\ \ \ \ \ \ \ \ \ 0} \\
\mbox{\ \ \ \ \ \ \ \ \ 0} & 1.364187065025 & 0.033930811053 & 0.191279146930 & \mbox{\ \ \ \ \ \ \ \ \ 0} \\
0.191279146930 & \mbox{\ \ \ \ \ \ \ \ \ 0} & 0.328377898745 & \mbox{\ \ \ \ \ \ \ \ \ 0} & 0.344634799909
\end{array} \right].
\]}

\noindent
(e) $LSL | LR | RSR \equiv LSLRSR$\,:\ \ $t_f = 4.678075540969$\,,
{\small
\[
\Xi = \left[
\begin{array}{lllll}
1.131511003931 &  \mbox{\ \ \ \ \ \ \ \ \ 0} & 0.645570959740 & 1.376095696461 & \mbox{\ \ \ \ \ \ \ \ \ 0} \\
0.475478947958 & 0.161367871190 & \mbox{\ \ \ \ \ \ \ \ \ 0} & \mbox{\ \ \ \ \ \ \ \ \ 0} & \mbox{\ \ \ \ \ \ \ \ \ 0} \\
\mbox{\ \ \ \ \ \ \ \ \ 0} & 0.326240580072 & 0.335261312120 & \mbox{\ \ \ \ \ \ \ \ \ 0} & 0.226549169496
\end{array} \right].
\]}

\noindent
(f) $LRL | LR | RSR \equiv LRLRSR$\,:\ \ $t_f = 4.762973480924$\,,
{\small
\[
\Xi = \left[
\begin{array}{lllll}
1.387975996662 & 0.040303570540 & \mbox{\ \ \ \ \ \ \ \ \ 0} & 0.442471697617 & \mbox{\ \ \ \ \ \ \ \ \ 0} \\
1.532395666196 & 0.398410096474 & \mbox{\ \ \ \ \ \ \ \ \ 0} & \mbox{\ \ \ \ \ \ \ \ \ 0} & \mbox{\ \ \ \ \ \ \ \ \ 0} \\
\mbox{\ \ \ \ \ \ \ \ \ 0} & 0.530782705179 & 0.306214787566 & \mbox{\ \ \ \ \ \ \ \ \ 0} & 0.124418960689
\end{array} \right].
\]}

The remaining solutions in Figure~\ref{fig:MD_4pts}(b)--(f) were reported by Knitro (as well as other optimization software such as Ipopt and SNOPT) as {\em locally optimal}.  However, this does not readily imply that these are locally optimal solutions, or stationary solutions, indeed, of the infinite-dimensional Problem~(P).  We note that the solution in Figure~\ref{fig:MD_4pts}(a) contains only $CSC$ type curves in each stage, and therefore, by Theorem~\ref{stationary_CSC}, the interpolating curve is at least stationary.  To establish stationarity of the other feasible solutions, one needs to check the conditions listed in Theorem~\ref{stationarity_int}.
It is interesting to note that when two stages of type $CSC$ follow one another, as in (a) and (d) above, the lengths of the subarcs just before and just after the relevant node are of equal length, as stated in Proposition~\ref{lengths_CSC}(b) and Remark~\ref{midpoint_CSC}, as a necessary condition of optimality.  This condition is verified in both (a) and (d) above: In (a), $\xi_4^1 = \xi_1^2 = 0.115596919495$ and $\xi_5^2 = \xi_2^3 = 0.122237275595$; in (d), $\xi_4^2 = \xi_1^3 = 0.191279146930$.

Numerical experiments concerning this 4-point interpolation problem yields more than just six solutions---in fact, the number of solutions $M$ is far greater than 10.  As expected, $M$ grows exponentially with the number of interpolant nodes.  As a result, a large number of stationary, or critical, solutions makes it difficult to find a Markov--Dubins interpolating curve, which has the shortest length.

\begin{figure}[t]
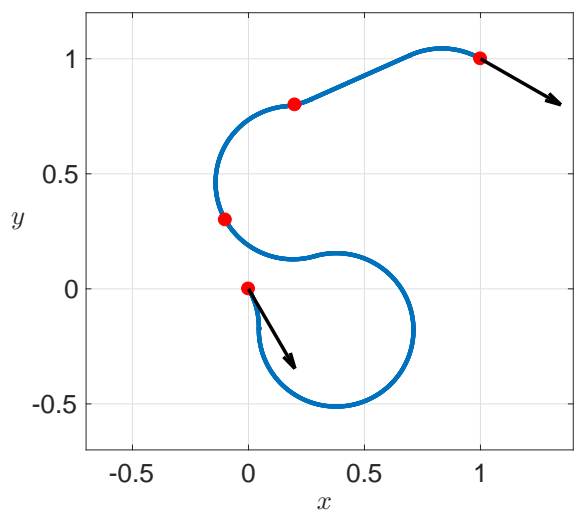
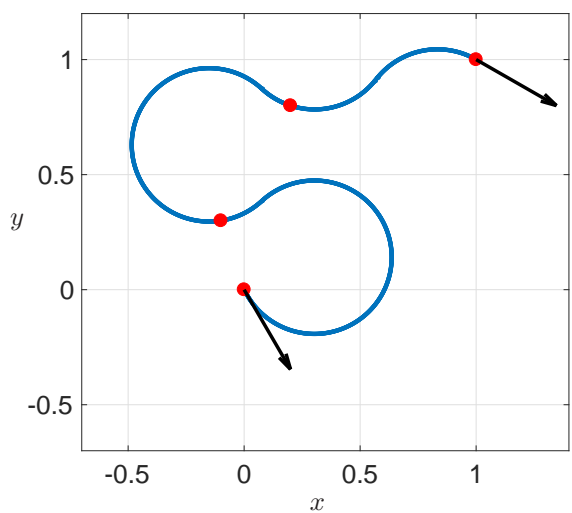
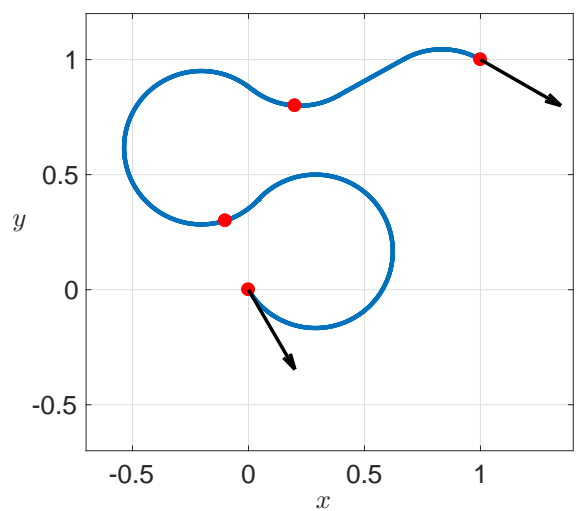
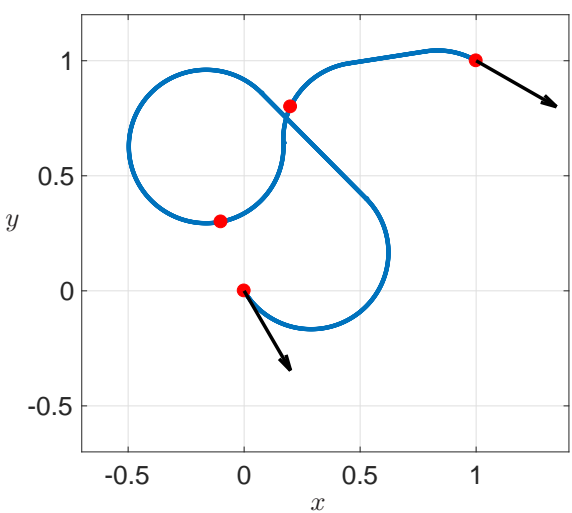
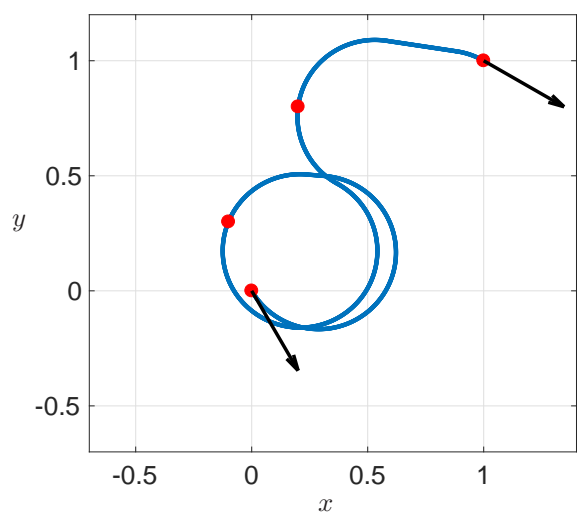

\begin{minipage}{80mm}
\begin{center}
\hspace*{-5mm}
\psfrag{x}{\small $x$}
\psfrag{y}{\small $y$}
\includegraphics[width=95mm]{MD_4pts_1.eps} \\[3mm]
(a) {\small Type $RSLSRSR$;\ \ $t_f = 3.415578858075$}
\end{center}
\end{minipage}
\begin{minipage}{80mm}
\begin{center}
\hspace*{-5mm}
\psfrag{x}{\small $x$}
\psfrag{y}{\small $y$}
\includegraphics[width=95mm]{MD_4pts_2.eps} \\[3mm]
(b) {\small Type $RLRLSR$;\ \ $t_f = 3.859270768865$}
\end{center}
\end{minipage}
\begin{minipage}{80mm}
\begin{center}
\hspace*{-5mm}
\psfrag{x}{\small $x$}
\psfrag{y}{\small $y$}
\includegraphics[width=95mm]{MD_4pts_3.eps} \\[3mm]
(c) {\small Type $RLRLR$;\ \ $t_f = 4.258605346880$}
\end{center}
\end{minipage}
\begin{minipage}{80mm}
\begin{center}
\hspace*{-5mm}
\psfrag{x}{\small $x$}
\psfrag{y}{\small $y$}
\includegraphics[width=95mm]{MD_4pts_4.eps} \\[3mm]
(d) {\small Type $LRSLSR$;\ \ $t_f = 4.298084620005$}
\end{center}
\end{minipage}
\begin{minipage}{80mm}
\begin{center}
\hspace*{-5mm}
\psfrag{x}{\small $x$}
\psfrag{y}{\small $y$}
\includegraphics[width=95mm]{MD_4pts_5.eps} \\[3mm]
(e) {\small Type $LSLRSR$;\ \ $t_f = 4.678075540969$}
\end{center}
\end{minipage}
\begin{minipage}{80mm}
\begin{center}
\psfrag{x}{\small $x$}
\psfrag{y}{\small $y$}
\includegraphics[width=95mm]{MD_4pts_6.eps} \\[3mm]
(f) {\small Type $LRLRSR$;\ \ $t_f = 4.762973480924$}
\end{center}
\end{minipage}
\
\caption{\small\sf Example~1 -- (a) Markov--Dubins interpolating curve from $(0,0,-\pi/3)$ to $(1,1,-\pi/6)$ via $(-0.1,0.3)$ and $(0.2,0.8)$, with $a=3$, and (b)--(f) some of the other stationary stationary solutions of Problem~(Ps).}
\label{fig:MD_4pts}
\end{figure}

\clearpage

\afterpage{\clearpage}

\noindent
{\bf Example 2}

\noindent
Consider Problem~(P), or equivalently Problem~(Pc).  The initial and terminal oriented points are given as $(x_0,y_0,\theta_0) = (0,0,-\pi/3)$ and $(x_f,y_f,\theta_f) = (0.5,0,-\pi/6)$, respectively.  The intermediate points $(x_i,y_i)$, $i=1,\ldots,4$, are respectively taken to be $(-0.1,0.3)$, $(0.2,0.8)$, $(1,1)$ and $(0.5,0.5)$, and the bound on the curvature is $a=3$.  The lower bound on the turning radius is then $1/3$.

Computational experiments indicate that there are hundreds of  feasible solutions, all of which satisfy the arc-type conditions given in Theorem~\ref{Dubins_int}.  The configurations and the subarc lengths of four (selected) feasible solutions of Problem~(P) are provided below, obtained by solving Problem~(Ps).  These solutions are depicted in Figure~\ref{fig:MD_6pts}(a)--(d).  The solution curve in Figure~\ref{fig:MD_6pts}(a) has the shortest length we were able to find, and so it is declared here to be a Markov--Dubins interpolating curve, at least as far as the computations carried out in this paper are concerned.

\ \\
\noindent
(a) $RSL | LSR | RSR | RSR | RLR \equiv RSLSRSRSRLR$\,:\ \ $t_f = 6.278034550309$\,,
{\small
\[
\Xi = \left[
\begin{array}{lllll}
\mbox{\ \ \ \ \ \ \ \ \ 0} & 1.607146208885 & 0.253152303916 & 0.109461129478 & \mbox{\ \ \ \ \ \ \ \ \ 0} \\
0.109461129478 & \mbox{\ \ \ \ \ \ \ \ \ 0} & 0.411866814272 & \mbox{\ \ \ \ \ \ \ \ \ 0} & 0.063620967753 \\
\mbox{\ \ \ \ \ \ \ \ \ 0} & 0.063620967753 & 0.349008605883 & \mbox{\ \ \ \ \ \ \ \ \ 0} & 0.551024831028 \\
\mbox{\ \ \ \ \ \ \ \ \ 0} & 0.551024831028 & 0.055775140041 & \mbox{\ \ \ \ \ \ \ \ \ 0} & 0.362796821592 \\
\mbox{\ \ \ \ \ \ \ \ \ 0} & 0.105078700947 & \mbox{\ \ \ \ \ \ \ \ \ 0} & 1.425262495545 & 0.259733602711
\end{array} \right].
\]}

\noindent
(b) $RSL | LSR | RSR | RLR | LR \equiv RSLSRSRLRLR$\,:\ \ $t_f = 6.488873243877$\,,
{\small
\[
\Xi = \left[
\begin{array}{lllll}
\mbox{\ \ \ \ \ \ \ \ \ 0} & 1.608655551819 & 0.246889937788 & 0.114406041268 & \mbox{\ \ \ \ \ \ \ \ \ 0} \\
0.114406041268 & \mbox{\ \ \ \ \ \ 0} & 0.358542879421 & \mbox{\ \ \ \ \ \ \ \ \ 0} & 0.113274189452 \\
\mbox{\ \ \ \ \ \ \ \ \ 0} & 0.113274189452 & 0.416609605051 & \mbox{\ \ \ \ \ \ \ \ \ 0} & 0.341389286409 \\
\mbox{\ \ \ \ \ \ \ \ \ 0} & 0.364397523143 & \mbox{\ \ \ \ \ \ \ \ \ 0} & 0.045089419939 & 0.908572347990 \\
1.499582819736 & 0.243783411139 & \mbox{\ \ \ \ \ \ \ \ \ 0} & \mbox{\ \ \ \ \ \ \ \ \ 0} & \mbox{\ \ \ \ \ \ \ \ \ 0}
\end{array} \right].
\]}

\noindent
(c) $RLR | RL | LSR | RSR | RLR \equiv RLRLSRSRLR$\,:\ \ $t_f = 6.729555454357$\,,
{\small
\[
\Xi = \left[
\begin{array}{lllll}
\mbox{\ \ \ \ \ \ 0} & 0.185101731608 & \mbox{\ \ \ \ \ \ 0} & 1.673440788217 & 0.456049670642 \\
\mbox{\ \ \ \ \ \ 0} & 0.622953488994 & \mbox{\ \ \ \ \ \ 0} & 0.066834089291 & \mbox{\ \ \ \ \ \ 0} \\
0.121935127737 & \mbox{\ \ \ \ \ \ 0} & 0.272852512220 & \mbox{\ \ \ \ \ \ 0} & 0.565210311199 \\
\mbox{\ \ \ \ \ \ 0} & 0.565210311199 & 0.054452710847 & \mbox{\ \ \ \ \ \ 0} & 0.357505055062 \\
\mbox{\ \ \ \ \ \ 0} & 0.102754639709 & \mbox{\ \ \ \ \ \ 0} & 1.426181573000 & 0.259073444632
\end{array} \right].
\]}

\noindent
(d) $RLR | RL | LSR | RSR | LR \equiv RLRLSRSRLR$\,:\ \ $t_f = 6.933659387154$\,,
{\small
\[
\Xi = \left[
\begin{array}{lllll}
\mbox{\ \ \ \ \ \ 0} & 0.180848776168 & \mbox{\ \ \ \ \ \ 0} & 1.671336709782 & 0.449898895805 \\
\mbox{\ \ \ \ \ \ 0} & 0.655840953047 & \mbox{\ \ \ \ \ \ 0} & 0.044391479634 & \mbox{\ \ \ \ \ \ 0} \\
0.074776055868 & \mbox{\ \ \ \ \ \ 0} & 0.439394036526 & \mbox{\ \ \ \ \ \ 0} & 0.354344875528 \\
\mbox{\ \ \ \ \ \ 0} & 0.354344875528 & 0.088624145788 & \mbox{\ \ \ \ \ \ 0} & 0.876492352605 \\
1.499582819736 & 0.243783411139 & \mbox{\ \ \ \ \ \ 0} & \mbox{\ \ \ \ \ \ 0} & \mbox{\ \ \ \ \ \ 0}
\end{array} \right].
\]}

\begin{figure}[t]
\begin{minipage}{80mm}
\begin{center}
\hspace*{-5mm}
\psfrag{x}{\small $x$}
\psfrag{y}{\small $y$}
\includegraphics[width=95mm]{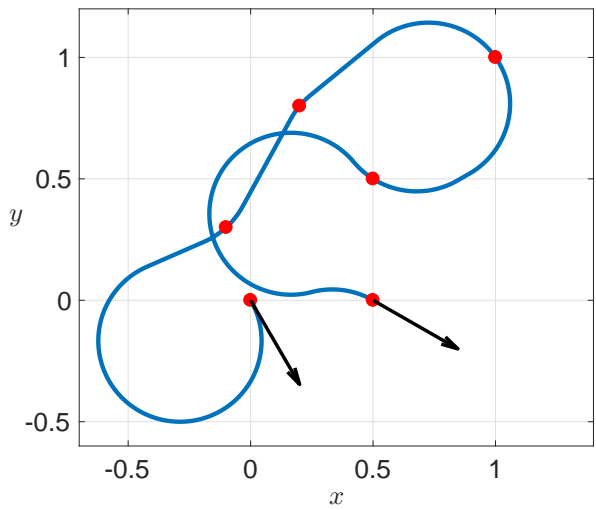} \\[3mm]
(a) {\small Type $RSLSRSRSRLR$;\ \ $t_f = 6.2780346$}
\end{center}
\end{minipage}
\begin{minipage}{80mm}
\begin{center}
\hspace*{-5mm}
\psfrag{x}{\small $x$}
\psfrag{y}{\small $y$}
\includegraphics[width=95mm]{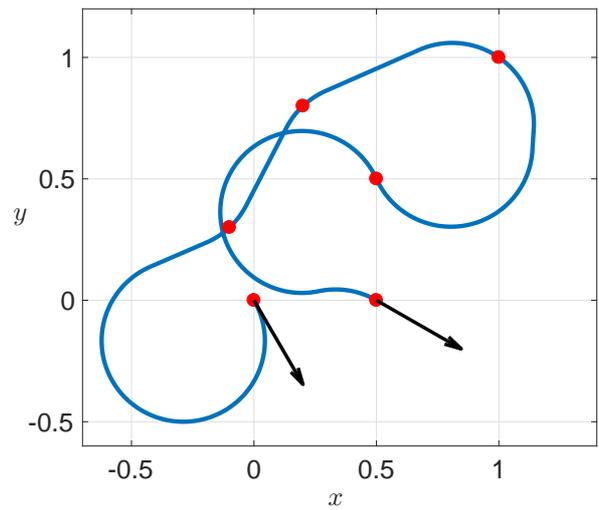} \\[3mm]
(b) {\small Type $RSLSRSRLRLR$;\ \ $t_f = 6.4888732$}
\end{center}
\end{minipage}
\begin{minipage}{80mm}
\begin{center}
\hspace*{-5mm}
\psfrag{x}{\small $x$}
\psfrag{y}{\small $y$}
\includegraphics[width=95mm]{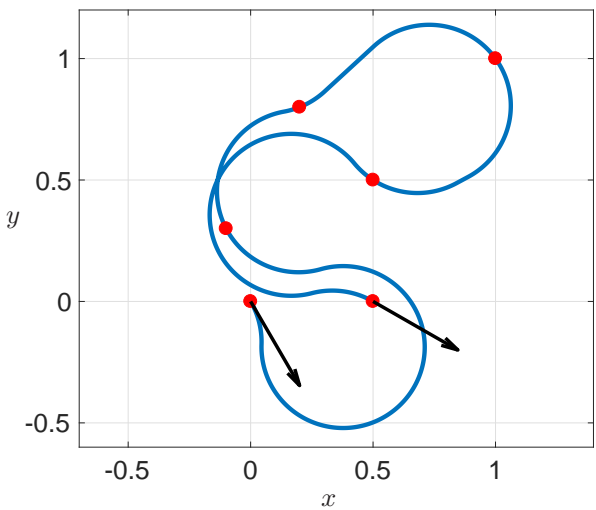} \\[3mm]
(c) {\small Type $RLRLSRSRLR$;\ \ $t_f = 6.7295555$}
\end{center}
\end{minipage}
\begin{minipage}{80mm}
\begin{center}
\hspace*{-1mm}
\psfrag{x}{\small $x$}
\psfrag{y}{\small $y$}
\includegraphics[width=95mm]{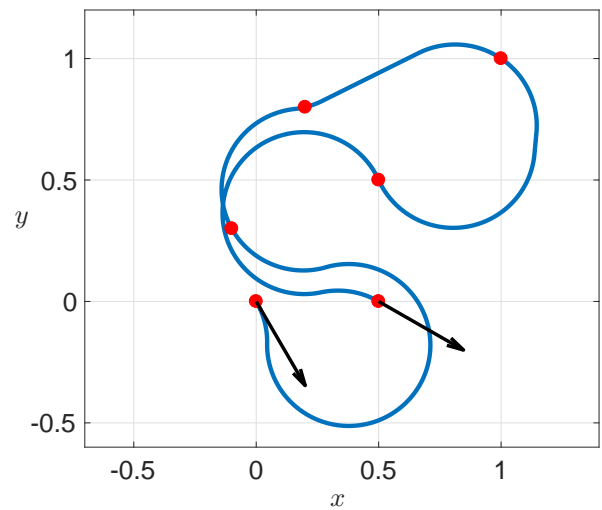} \\[3mm]
(d) {\small Type $RLRLSRSRLR$;\ \ $t_f = 6.9336594$}
\end{center}
\end{minipage}
\
\caption{\small\sf Example 2 -- (a) Markov--Dubins interpolating curve from $(0,0,-\pi/3)$ to $(0.5,0,-\pi/6)$ via $(-0.1,0.3)$, $(0.2,0.8)$, $(1,1)$ and $(0.5,0.5)$, with $a=3$, and (b)--(d) some of the other feasible solutions of Problem~(Ps).}
\label{fig:MD_6pts}
\end{figure}

For the stationarity of the feasible solutions in (b)--(d) above, the conditions listed in Theorem~\ref{stationarity_int} can be checked. It should be noted that all of the solutions listed in (a)-(d) above have CSC solutions in some two consecutive stages, and that they verify the condition stated in Proposition~\ref{lengths_CSC}(b) and Remark~\ref{midpoint_CSC}, as a necessary condition of optimality: In (a), $\xi_4^1 = \xi_1^2$, $\xi_5^2 = \xi_2^3$ and $\xi_5^3 = \xi_2^4$; in (b), $\xi_4^1 = \xi_1^2$, $\xi_5^2 = \xi_2^3$; and in (c) and (d), $\xi_5^3 = \xi_2^4$.

It is interesting to note that, in (b) and (d), a switching from an $R$-subarc to an $L$-subarc occurs exactly at the last interior node $(x_5,y_5) = (0.5,0.5)$. Switchings at the nodes are rare events, as pointed earlier.  However, these events happen here conceivably because of the particular choices of the interior nodes as well as the maximum allowed curvature $a$.

\clearpage

\noindent
{\bf Example 3}

\noindent
In this example, we consider 20 points.  The initial and terminal oriented points are taken to be $(x_0,y_0,\theta_0) = (0.5,1.2,,5\pi/6)$ and $(x_f,y_f,\theta_f) = (2.5,0.6,0)$, respectively.  The intermediate points $(x_i,y_i)$, $i=1,\ldots,18$, respectively are
{\small
\[
\left[
\begin{array}{ccc}
p_1 & \cdots & p_9 \\[1mm]
p_{10} & \cdots & p_{18}
\end{array} \right] =
\left[
\begin{array}{ccccccccc}
(0, 0.8) & (0, 0.4) & (0.1,0) & (0.4, 0.2) & (0.5, 0.5) & (0.6, 1) & (1, 0.8) & (1, 0) & (1.4, 0.2) \\[1mm]
(1.2, 1) & (1.5, 1.2) & (2, 1.5) & (1.5, 0.8) & (1.5, 0) & (1.7, 0.6) & (1.9, 1) & (2, 0.5) & (1.9, 0)
\end{array} \right].
\]}
The bound on the curvature is $a = 5$; namely the minimum turning radius is $0.2$.  These kinds of interpolation problems arise in land and marine surveillance, including military and civilian search-and-rescue operations.  The configuration and subarc lengths of the shortest length solution of Problem~(Ps) we were able to find using AMPL and Knitro are provided below.
{\small
\[
LSL | LSR | RSL | LSL | LSL | LSR | RSR | RSL | LSL | LSR | RSL | LSR | RSL | LSL | LSL | LSR | RSR | RSL | LSR
\]
\[
\equiv LSLSRSLSLSLSRSRSLSLSRSLSRSLSLSLSRSRSLSR\,:\ \ t_f = 11.916212654286\,,
\]}
{\small
\[
\Xi = \left[
\begin{array}{lllll}
0.292683660485 & \mbox{\ \ \ \ \ \ \ \ \ 0} & 0.354227249883 & 0.066067208642 & \mbox{\ \ \ \ \ \ \ \ \ 0} \\
0.066067208642 & \mbox{\ \ \ \ \ \ \ \ \ 0} & 0.314358037636 & \mbox{\ \ \ \ \ \ \ \ \ 0} & 0.020629993182 \\
\mbox{\ \ \ \ \ \ \ \ \ 0} & 0.020629993182 & 0.158248660263 & 0.281673366237 & \mbox{\ \ \ \ \ \ \ \ \ 0} \\
0.281673366237 & \mbox{\ \ \ \ \ \ \ \ \ 0} & 0.105094394904 & 0.017147975416 & \mbox{\ \ \ \ \ \ \ \ \ 0}  \\
0.017147975416 & \mbox{\ \ \ \ \ \ \ \ \ 0} & 0.262701065614 & 0.036592830635 & \mbox{\ \ \ \ \ \ \ \ \ 0}  \\
0.036592830635 & \mbox{\ \ \ \ \ \ \ \ \ 0}  & 0.278860332397 & \mbox{\ \ \ \ \ \ \ \ \ 0} & 0.225886597060 \\
\mbox{\ \ \ \ \ \ \ \ \ 0} & 0.225886597060 & 0.151864534206 & \mbox{\ \ \ \ \ \ \ \ \ 0} & 0.112422725874 \\
\mbox{\ \ \ \ \ \ \ \ \ 0} & 0.112422725874 & 0.488292307990 & 0.245323671189 & \mbox{\ \ \ \ \ \ \ \ \ 0}  \\
0.245323671189 & \mbox{\ \ \ \ \ \ \ \ \ 0} & 0.131702140115 & 0.125114048917 & \mbox{\ \ \ \ \ \ \ \ \ 0}  \\
0.125114048917 & \mbox{\ \ \ \ \ \ \ \ \ 0} & 0.565103190136 & \mbox{\ \ \ \ \ \ \ \ \ 0} & 0.151217907866 \\
\mbox{\ \ \ \ \ \ \ \ \ 0} & 0.151217907866 & 0.164572410900 & 0.054093591072 & \mbox{\ \ \ \ \ \ \ \ \ 0}  \\
0.054093591072 & \mbox{\ \ \ \ \ \ \ \ \ 0} & 0.281891534948 & \mbox{\ \ \ \ \ \ \ \ \ 0} & 0.342811045871 \\
\mbox{\ \ \ \ \ \ \ \ \ 0} & 0.342811045871 & 0.568086259614 & 0.061595474597 & \mbox{\ \ \ \ \ \ \ \ \ 0}  \\
0.061595474597 & \mbox{\ \ \ \ \ \ \ \ \ 0} & 0.510111126314 & 0.348337479570 & \mbox{\ \ \ \ \ \ \ \ \ 0}  \\
0.348337479569 & \mbox{\ \ \ \ \ \ \ \ \ 0} & 0.386735718514 & 0.003761723139 & \mbox{\ \ \ \ \ \ \ \ \ 0}  \\
0.003761723139 & \mbox{\ \ \ \ \ \ \ \ \ 0} & 0.178946350711 & \mbox{\ \ \ \ \ \ \ \ \ 0} & 0.351310541702 \\
\mbox{\ \ \ \ \ \ \ \ \ 0} & 0.351310541702 & 0.216552941908 & \mbox{\ \ \ \ \ \ \ \ \ 0} & 0.040835697925 \\
\mbox{\ \ \ \ \ \ \ \ \ 0} & 0.040835697925 & 0.212956892491 & 0.349098604305 & \mbox{\ \ \ \ \ \ \ \ \ 0}  \\
0.349098604305 & \mbox{\ \ \ \ \ \ \ \ \ 0} & 0.378354575733 & \mbox{\ \ \ \ \ \ \ \ \ 0} & 0.247028303126
\end{array} \right].
\]}

Figure~\ref{fig:MD_20pts} depicts the computed interpolating curve.  it should be noted that $\xi_3^i\neq0$, $i=1,\ldots,19$, and that each stage arc is of type $CSC$.  So, by Theorem~\ref{stationary_CSC}, the solution is proved to be at least stationary.  It can be easily checked by inspection that the tabulated subarc lengths in matrix $\Xi$ above also verify the necessary conditions of optimality in Proposition~\ref{lengths_CSC} and Remark~\ref{midpoint_CSC}, in that each node subdivides a $C$-type subarc into two $C$-type subarcs of equal lengths and that the length of each $C$-type subarc is less than $\pi$.  To verify that the stationary interpolating curve found in this example is locally optimal, further analysis is needed, e.g., by using the theory and computational approaches for second-order sufficient conditions of optimality given in \cite{MauBueKimKay2005, OsmMau2012,AroBonDmiLot2012}.

There seem to be thousands of solutions of (Ps) which satisfy the arc-type conditions given in Theorem~\ref{Dubins_int}.  Therefore, finding a global optimal solution is a much greater challenge for this particular interpolation problem which has a large number of nodes.

\begin{figure}[t]
\[
\hspace*{-5mm}
\psfrag{x}{$x$}
\psfrag{y}{$y$}
\includegraphics[width=130mm]{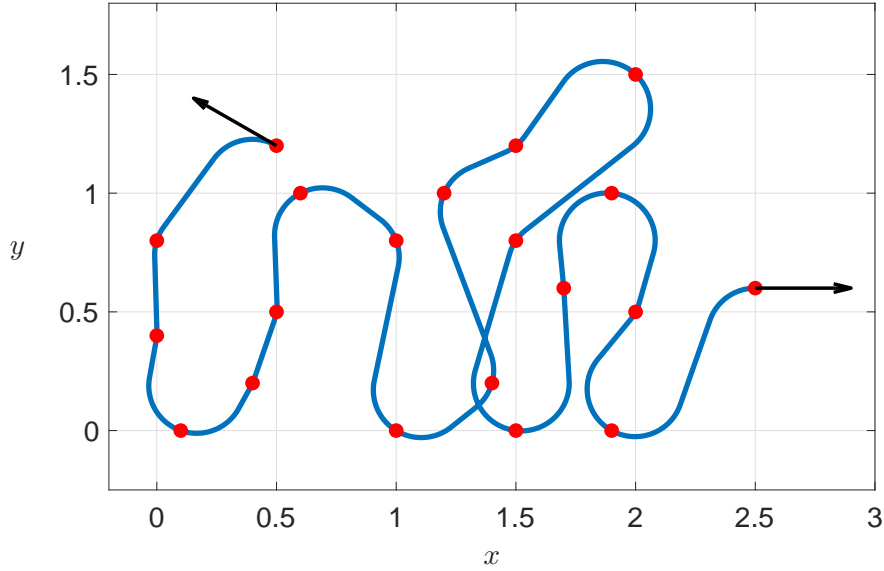}
\]
\caption{\small\sf Example~3 -- A 20-node Markov--Dubins interpolating curve, whose length is $t_f = 11.916212654286$, from $(0.5,1.2,5\pi/6)$ to $(2.5,0.6,0)$, with $a=3$.}
\label{fig:MD_20pts}
\end{figure}

\newpage

\noindent
{\bf Example 4}

\noindent
In this example, we also consider a large number of nodes for the interpolating curve to pass through, but with a ``shape'' or ``pattern."  The initial and terminal oriented points are given as $(x_0,y_0,\theta_0) = (0.5,1.2,,5\pi/6)$ and $(x_f,y_f,\theta_f) = (0,-0.5,0)$, respectively.  The intermediate points $(x_i,y_i)$, $i=1,\ldots,10$, respectively are
\[
\left[
\begin{array}{ccc}
p_1 & \cdots & p_5 \\[1mm]
p_6 & \cdots & p_9
\end{array} \right] =
\left[
\begin{array}{ccccc}
(0.0, 0.5) & (0.5, 0.5) & (1.0, 0.5) & (1.5, 0.5) & (2.0, 0.5) \\[1mm]
(2.0, 0.0) & (1.5, 0.0) & (1.0, 0.0) & (0.5, 0.0) & (0.0, 0.0)
\end{array} \right].
\]
The bound on the curvature is $a = 3$; namely the minimum turning radius is $1/3$.  These interpolation problems  arise in applications where it is necessary to map an area completely for the purposes of military or civilian surveillance, agriculture, etc.  The configuration and subarc lengths of the shortest length solution of Problem~(Ps) we were able to find using AMPL and Knitro are provided below.

\noindent
{\small$LSL | LR | RSL | LSL | LSR | R | RSL | LSR | RSL | LSR | RLR \equiv LSLRSLSLSRSLSRSLSRLR$\,: \\[1mm]
\hspace*{10mm}$t_f = 7.467562181965$\,,}

{\small
\[
\Xi = \left[
\begin{array}{lllll}
0.517980939547 & \mbox{\ \ \ \ \ \ \ \ \ 0} & 0.199236689725 & 0.448783310430 & \mbox{\ \ \ \ \ \ \ \ \ 0} \\
0.444952611925 & 0.098280826419 & \mbox{\ \ \ \ \ \ \ \ \ 0} & \mbox{\ \ \ \ \ \ \ \ \ 0} & \mbox{\ \ \ \ \ \ \ \ \ 0} \\
\mbox{\ \ \ \ \ \ \ \ \ 0} & 0.102046764427 & 0.396637972184 & 0.002661244193 & \mbox{\ \ \ \ \ \ \ \ \ 0} \\
0.002661244193 & \mbox{\ \ \ \ \ \ \ \ \ 0} & 0.426792526518 & 0.071025820394 & \mbox{\ \ \ \ \ \ \ \ \ 0} \\
0.071025820394 & \mbox{\ \ \ \ \ \ \ \ \ 0} & 0.085837912032 & \mbox{\ \ \ \ \ \ \ \ \ 0} & 0.377944339773 \\
\mbox{\ \ \ \ \ \ \ \ \ 0} & 0.565374719321  & \mbox{\ \ \ \ \ \ \ \ \ 0} & \mbox{\ \ \ \ \ \ \ \ \ 0} & \mbox{\ \ \ \ \ \ \ \ \ 0} \\
\mbox{\ \ \ \ \ \ \ \ \ 0} & 0.377949440124 & 0.085821641969 & 0.071037130565 & \mbox{\ \ \ \ \ \ \ \ \ 0} \\
0.071037130565 & \mbox{\ \ \ \ \ \ \ \ \ 0} & 0.426468907257 & \mbox{\ \ \ \ \ \ \ \ \ 0} & 0.002973423917 \\
\mbox{\ \ \ \ \ \ \ \ \ 0} & 0.002973423917 & 0.467025768202 & 0.030039625776 & \mbox{\ \ \ \ \ \ \ \ \ 0} \\
0.030039625776 & \mbox{\ \ \ \ \ \ \ \ \ 0} & 0.237647474938 & \mbox{\ \ \ \ \ \ \ \ \ 0} & 0.246307287638 \\
\mbox{\ \ \ \ \ \ \ \ \ 0} & 0.052184608613 & \mbox{\ \ \ \ \ \ \ \ \ 0} & 1.420667379008 & 0.134146572224
\end{array} \right].
\]}

Figure~\ref{fig:MD_12pts} depicts the computed interpolating curve. For the stationarity of this solution, the conditions listed in Theorem~\ref{stationarity_int} can be checked. It should be noted the stages 2--5 and 6--9 have CSC solutions, and so the necessary condition of optimality stated in Proposition~\ref{lengths_CSC}(b) and Remark~\ref{midpoint_CSC}, is verified for any two consecutive stage curves of type $CSC$.

\begin{figure}[t]
\[
\hspace*{-5mm}
\psfrag{x}{$x$}
\psfrag{y}{$y$}
\includegraphics[width=120mm]{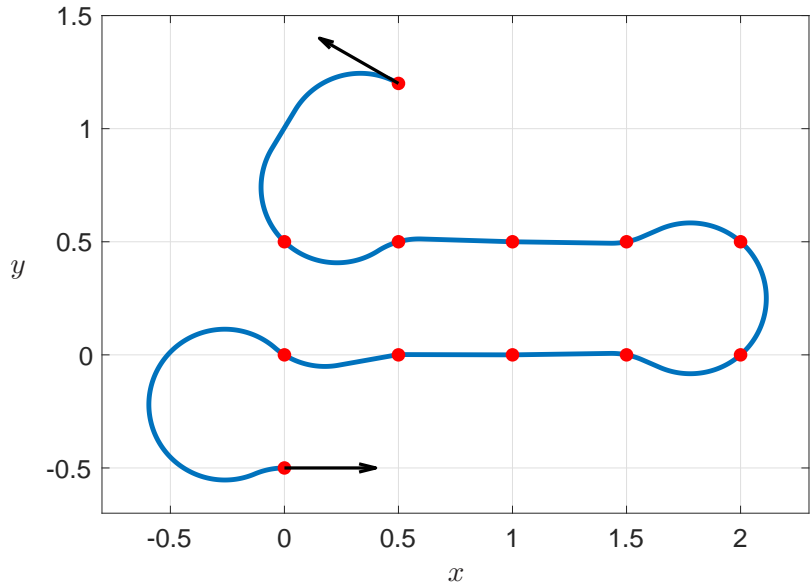}
\]
\caption{\small\sf Example~4 -- A 12-node Markov--Dubins interpolating curve, whose length is $t_f = 7.467562181965$, from $(0.5,1.2,5\pi/6)$ to $(0,-0.5,0)$, with $a=3$.}
\label{fig:MD_12pts}
\end{figure}

\section{Conclusion and Open Problems}
\label{conclusion}

We have studied the Markov--Dubins interpolation problem by employing optimal control theory.  The problem is a natural extension of the Markov--Dubins problem, in that it requires the shortest curve of constrained curvature between two oriented points to pass through a number of intermediate points.  We have shown that an optimal control formulation of  the interpolation problem can be abnormal, as well as normal (Lemmas~\ref{rho_nonsing_int} and \ref{abnormal_int}).   We characterized the associated solutions: in a stage, i.e. between any two consecutive nodes, one has a curve of type $C$ or $CC$ when the problem is abnormal, or type $CCC$ or $CSC$ or a subset thereof when the problem is normal (Theorem~\ref{Dubins_int}).  We established that if the problem is abnormal, then the whole interpolating curve is abnormal, i.e., of type $C\cdots C$. 

Under the assumption that the curvature at the nodes are continuous, in a curve with $(N+1)$ nodes, the number of subarcs is bounded above by $(2N+1)$ (Proposition~\ref{subarcs_int}).  For stationarity, or criticality, of the interpolating curve, certain junction conditions (at the nodes) need to be satisfied (Theorems~\ref{stationarity_int} and \ref{stationarity_int_abnormal}).  These junction conditions are satisfied automatically and so the interpolating curve is indeed stationary, if the stage curves are of type $CSC$ (Theorem~\ref{stationary_CSC}), i.e. the interpolating curve is of type $CSCSC\cdots CSC$, which is the typical structure of the optimal solution when each two consecutive nodes are far enough from one another.  

We have also obtained a necessary condition of optimality: When two consecutive stage arcs of the optimal interpolating curve are of type $CSC$ the node between the two stages subdivide the $C$ subarc into two segments of equal length (Proposition~\ref{lengths_CSC}).  This result has already been useful in eliminating some of the computational solutions (which did not satisfy the condition).

We have proposed a numerical method in finding Markov--Dubins interpolating curves by utilizing arc-parameterization techniques from earlier work.  The approach we had developed in \cite{Kaya2017} for Markov--Dubins path has served as a building block, as was promised in that paper.  We presented examples with small as well as relatively large number of nodes, which both illustrated the numerical method and verified the theoretical results.  

Reference \cite{Kaya2017} already hints via examples that one should expect to find many stationary solutions of the Markov--Dubins problem, i.e., Problem~(P) with $N=1$, by using finite-dimensional optimization methods for solving the induced arc-parameterized problem~(Ps).  Optimization software associated with these methods declare that a solution that it finds is ``locally optimal'' for the arc-parameterized problem.  On the other hand, local optimality, or even stationarity, may not in general be true for the infinite dimensional Problem~(P).  Finding a globally optimal solution can therefore be a real challenge for Problem~(P) with $N>1$, since the number of stationary solutions found by the optimization software for the generalized arc-parameterized problem is expected to grow exponentially with the number nodes, $(N+1)$.

Establishing the nature of the stationaty solutions of the Markov--Dubins interpolating problem remains an open problem as in the case of $N=1$.  For establishing second-order sufficient conditions of optimality, the results and numerical implementation in references~\cite{MauBueKimKay2005, OsmMau2012, AroBonDmiLot2012} could perhaps be utilized, and this is a topic of future research.

In practical situations, for example in the optimal flight trajectory planning of a drone or in the path optimization of underground mine tunnelling, constraints are often imposed because of the features of the terrain and the no-go areas.  To the author's knowledge, Micchelli, Smith, Swetits and Ward were the first to tackle certain scalar interpolation problems with spatial constraints in \cite{MicSmiSweWar1985}, where they required convexity of the interpolants.  It is well-known that an interpolating curve minimizing the $L^2$-norm of its acceleration is a piecewise cubic spline.  References~\cite{Dontchev1993, DonKol1996, DonQiQiYin2002} investigated such problems with various types of constraints such as ``strips'' between consecutive data points and convexity. The works~\cite{FreObeOpf1999, OpfObe1988} studied restricted as well as monotone cubic spline interpolants by treating the interpolation problem as an optimal control problem.  These earlier efforts justify the consideration of spatial constraints in the future for the Markov--Dubins interpolating problem, with $N\ge2$, which is well-known to be more challenging.

In certain situations, it would be of interest to provide the intermediate nodes in no particular order, i.e., the order in which the intermediate nodes are ``visited'' would also need to be optimized.  In the case when the curve ends where it started, this kind of problem can be modelled as a {\em travelling salesperson problem}, as the way it is considered in~\cite{IsaShi2015, SavFraBul2008}.  In both~\cite{IsaShi2015, SavFraBul2008}, the problems are studied by heuristic approaches. This is by every means very valuable in practical applications; however, it would be interesting to study these kinds of problems by the setting introduced in this paper.

\end{document}